\newtheorem{thm}{Theorem}[section]
\newtheorem{prop}[thm]{Proposition}
\newtheorem{lem}[thm]{Lemma}
\newtheorem{rem}[thm]{Remark}
\numberwithin{equation}{section}
\newcommand{\cZ}{\mathcal{Z}}
\title[Nonparametric Bayesian inference of diffusions]{Nonparametric Bayesian inference of discretely observed diffusions}
\author[J.-C. Croix, M. Dashti and I. Z. Kiss]{\sc Jean-Charles Croix, Masoumeh Dashti, and Istv\`an Zolt\`an Kiss}
\address{\noindent Department of Mathematics, University of Sussex, Brighton BN1 9QH, UK\\
 {j.croix@sussex.ac.uk},  {m.dashti@sussex.ac.uk}, {i.z.kiss@sussex.ac.uk}}
\date{\noindent \today}
\begin{document}
	
	\begin{abstract}	
\noindent We consider the problem of the Bayesian inference of drift and  diffusion coefficient functions in a stochastic differential equation, given discrete observations of a realisation of its solution. We give conditions for the well-posedness and stable approximations of the posterior measure. These conditions in particular allow for priors with unbounded support. Our proof relies on the explicit construction of transition probability densities using the parametrix method for general parabolic equations. We then study an application of these results in inferring the rates of Birth-and-Death processes.
	\end{abstract}
\maketitle
  
{\small    
   \noindent{\it Key words.}   inverse problems, Bayesian inference, diffusion processes, parametrix method, parabolic partial differential equations\\

   \noindent{\it AMS subject classifications.}   62G05, 62F15, 60J60, 65N21, 35K20\\
}    

	\section{Introduction}
	
	Stochastic processes are fundamental tools in the modelling of many real-world phenomena. In particular, stochastic differential equations (SDE) are often used in life sciences, engineering, economics and finance. Indeed, they provide a convenient way to represent dynamical systems including noise while they rely only on the specification of two functional coefficients of the drift and diffusion, and an initial condition. Such equations are usually represented as
	\begin{equation}
		dX_t=b(X_t,t)dt+\sqrt{a(X_t,t)}dW_t,\;t\in\mathbb{R}^+
		\label{Eq:SDEgeneral}
	\end{equation}
	where $X_0=x_0$ almost-surely, $(a,b)$ are respectively the diffusion and drift functions and $W=\left(W_t\right)_{t\in\mathbb{R}^+}$ is a standard Brownian motion on some filtered probability space $(\Omega,(\mathcal{F}_t)_{t\ge 0},\mathbb{P})$. Here we consider such processes restricted to a bounded interval, reduced without loss of generality to $I=[0,1]$. 
When it comes to applications, it is often the case that both the diffusion and the drift term are unknown, but we have access to some data generated by the process itself, mostly in the form of discrete observations, considered as a realisation from the random vector $(X_{s_1},\dots,X_{s_n})$ with discrete times $0\leq s_1<\dots<s_n\leq T$,  $n\in\mathbb{N}\setminus\lbrace 0\rbrace$, $T\in\mathbb{R}^+$. It is then natural to turn to statistical methods to infer these terms.
	
	Depending on both the nature of the SDE and the observables, a large catalogue of estimation methods are readily available, see for instance the reviews \cite{VanZanten2013,Fuchs2013} and references therein. However, the theoretical analysis of these problems, such as their well-posedness, stability or contraction rates for nonparametric approaches is far less investigated.
One main difficulty is that for low frequency data, one needs transition probability density functions as they underpin the likelihood. These, however, are not available in closed-forms and are only obtained as solutions to a particular type of partial differential equations. Indeed, if one has a vector of observations $y\in(0,1)^n$ of the process as described previously, the likelihood is
	\begin{equation}
		\mathcal{L}^{y,s}(a,b)=\prod_{i=1}^{n-1}p^{a,b}(x_{i+1},s_{i+1};x_i,s_i),
		\label{Eq:Likelihood}
	\end{equation}
	where we assumed that the solution $X$ to the SDE \eqref{Eq:SDEgeneral} admits $p^{a,b}(x,t;\xi,\tau)$ as transition probability density function. The variables $(x,t)$ and $(\xi,\tau)$ are respectively called forward and backward in time. Here $p^{a,b}$ is obtained as a fundamental solution (or Green function) of both Kolmogorov equations, the backward being
	\begin{gather}\label{Eq:KolmogorovBackward}
		-\frac{dp}{d\tau}=\frac{a(\xi,\tau)}{2}\frac{\partial^2 p}{\partial \xi^2}+b(\xi,\tau)\frac{\partial p}{\partial \xi},\\
		\lim_{\tau\to t}p(x,t;\xi,\tau)=\delta(\xi-x),
	\end{gather}
	and the forward
	\begin{gather*}
		\frac{dp}{dt}=\frac{1}{2}\frac{\partial^2(a(x,t)p)}{dx^2}-\frac{\partial (b(x,t)p)}{\partial x},\\
		\lim_{t\to \tau}p(x,t;\xi,\tau)=\delta(x-\xi).
	\end{gather*}
	In this work, the considered stochastic processes will satisfy either absorption or reflection on the boundaries. The former case corresponds to Dirichlet boundary conditions for both Kolmogorov equations. The latter will lead to Neumann and Robin boundary conditions respectively for the backward and forward Kolmogorov equations.
	
The literature on nonparametric inference methods for diffusions is vast \cite{Gobet2004,VanZanten2013,Pokern2013,Waaij2016,Nickl2017,Batz2018,Abraham2019,Gugushvili2020}, but only a few tackle the challenge of estimating both coefficients from discrete data. In \cite{Gobet2004}
the authors show that the problem of recovering both the drift and diffusion is ill-posed and they obtain minimax rates of convergence with a program using spectral theory under the low frequency regime (fixed time between data, increasing number of observations).
	More recently, the authors of \cite{Nickl2017} obtained minimax contraction rates of the posterior distribution for H\"older-Sobolev classes, providing theoretical guarantees of such approaches. Both these works consider reflected diffusions and priors with bounded support. 	
	
	In this paper, we study the situation where the amount of data is fixed and both drift and diffusion functions are unknown but H\"older continuous. We do not however specify a priori upper or lower bound for the coefficients (other than positivity for the diffusion coefficient). In this context, we provide a proof of the well-posedness of the Bayesian inference as well as a stability result with respect to appropriate approximations of the likelihood, both including unbounded priors.  In order to establish the necessary estimates and regularity properties of the likelihood (\ref{Eq:Likelihood}), our proof uses
the parametrix construction for fundamental solutions of parabolic equations \cite{Friedman1992} (see also \cite{Konakov2000, Konakov2017} for applications of this construction in the context of transition probability densities of Markov chains).   Here, we consider bounded domains with both Dirichlet (absorption) and Neumann/Robin (reflection) boundary conditions.  We follow the approach in \cite{Friedman1992} for construction of the fundamental solution, except that as the parametrix function, instead of the Gaussian density, we consider a series expansion obtained by the method of images \cite{Renshaw2015} that imposes the boundary conditions on the parametrix function (see Section \ref{s:altParametrix} below). 
These estimates then allow us to give sufficient conditions on the prior for the well-posedness of the posterior measure. These conditions, however, exclude a Gaussian prior. They require the tails of the prior to be thinner than those of the Gaussian measure. See Section \ref{subsec:SDEprior} for an example of a $\beta$-exponential prior with $\beta>3$ which satisfies the conditions on the prior.

We then apply our results to SDE approximations of a Birth-and-Death process where the up-jump rates are unknown.
In the SDE approximation of the process, these rates are translated to a function of the solution of the SDE that affects both the drift and the diffusion coefficients of the SDE. We construct an appropriate prior and formulate the posterior and also the variational functional whose minimisers give the maximum a posterior (MAP) estimators. Finally, in a numerical approximation we illustrate the performance of the conditional mean and the MAP estimators in estimating the truth.

	\section{Main results and assumptions}
	\label{sec:MainResults}

Let $(X_t)_{t\in\mathbb{R}^+}$ be a solution to  \eqref{Eq:SDEgeneral} which is either absorbed or reflected on the boundaries $\lbrace 0,1\rbrace$.
 What is referred to as data is a set of observations $y=(y_1,\dots,y_n)\in(0,1)^n$ at times $s:=(s_1,\dots,s_n)\in(0,T)^n$ with $n\geq 1$, $T\in(0,\infty)$ and $0\leq s_1<\dots<s_n\leq T$, taken as a realisation from the random vector $(X_{s_1},\dots,X_{s_n})$. 
 We suppose that $a$ and $b$ are H\"older continuous with the diffusion coefficient $a$ non-degenerate. In other words we suppose $(a,b)$ to lie in $\Lambda_\alpha$ given by
\begin{equation}\label{La}
 \Lambda_\alpha:=\Big\{(a,b)\in\mathcal{C}^{0,\alpha}\left([0,1]\times[0,T]\right)^2: m_a:=\min_{(x,t)\in[0,1]\times[0,T]}a(x,t)>0\Big\},
\end{equation} 
for some  $\alpha\in(0,1]$ and where the H\"older norm of a function $f\in\Lambda_\alpha$ is defined as
\begin{equation*}
			\lVert f\rVert_{0,\alpha}:=\lVert f\rVert_{L^\infty([0,1]\times [0,T])}
			+\sup_{(x,\xi,t,\tau)\in[0,1]^2\times[0,T]^2}\frac{\left\vert f(x,t)-f(\xi,\tau)\right\vert}{\vert x-\xi\vert^\alpha}.
\end{equation*}
For $(a,b)\in \Lambda_\alpha$ with some $\alpha\in(0,1]$, it is well-known that there exists a unique transition probability density function $p^{a,b}$ satisfying equation \eqref{Eq:KolmogorovBackward} (see e.g. \cite{Stroock1971}).  We will show that the likelihood from equation \eqref{Eq:Likelihood} leads to well-posed Bayesian inference problems (in the sense of \cite{Stuart2010}), when the prior measure satisfies a few technical conditions. This result is summarized in the two following Theorems (detailed proofs are given in Section \ref{sec:Proofs}).
In the following $d_{\rm H}$ denotes the Hellinger metric and for $\mu$ and $\mu'$ absolutely continuous with respect to a reference measure $\lambda$ on a space $\Lambda$, is the $L^2(\Lambda,\lambda)$ distance between $\sqrt{d\mu/d\lambda}$ and $\sqrt{d\mu'/d\lambda}$.
	\begin{thm}[Well-posedness]
		Let $T>0$, $n\geq 1$ and consider $y=(y_1,\dots,y_n)\in(0,1)^n$ to be the data at times $s=(s_1,\dots,s_n)\in(0,T)^n$ with $0<s_1\leq \dots\leq s_n\leq T$. Fix $\alpha\in (0,1]$ and suppose that  $\mu_0$ a probability measure with $\mu_0\left(\Lambda_\alpha\right)=1$ and there exists $C>0$ and $q>\frac{2}{\alpha}$ such that
		\begin{equation}
			\mathbb{E}^{\mu_0}\left[\exp\left(Cm_a^{1-q}\lVert (a,b)\rVert_{0,\alpha}^q\right)\right]<+\infty.
			\label{Eq:IntegrabilityCondition}
		\end{equation}
		Then there exists a unique posterior measure $\mu^{y,s}$ given by 
	\begin{equation*}
			\frac{d\mu^{y,s}}{d\mu_0}(a,b)=\frac{\mathcal{L}^{y,s}(a,b)}{\cZ(y,s)},
			\label{Eq:RadonNikodym}
		\end{equation*}
		where $\cZ(y,s):=\int_{\Lambda_\alpha}\mathcal{L}^{y,s}(a,b)d\mu_0\in\left(0,+\infty\right)$. Moreover, the posterior probability $\mu^{y,s}$ is continuous in $y$ with respect to Hellinger metric.
		\label{Thm:Main_thm}
	\end{thm}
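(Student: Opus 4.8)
The plan is to run the Bayesian well-posedness machinery of \cite{Stuart2010}. Everything reduces to quantitative control --- uniform for $x,\xi\in(0,1)$ and $0<t-\tau\le T$ --- of the transition density $p^{a,b}(x,t;\xi,\tau)$ and of its first spatial derivatives $\partial_x p^{a,b},\partial_\xi p^{a,b}$, with \emph{explicit} dependence on $m_a$ and $\lVert(a,b)\rVert_{0,\alpha}$. Granting such bounds, the proof splits into three parts: (i) $\mathcal{L}^{y,s}(a,b)>0$ for every $(a,b)\in\Lambda_\alpha$ and $\mathcal{L}^{y,s}\in L^1(\mu_0)$, so that $\cZ(y,s)\in(0,+\infty)$ and $d\mu^{y,s}/d\mu_0=\mathcal{L}^{y,s}/\cZ(y,s)$ is a well-defined (hence unique) probability density; (ii) $y\mapsto\mathcal{L}^{y,s}(a,b)$ is locally Lipschitz with a Lipschitz factor $L(\cdot)\in L^1(\mu_0)$; (iii) $\cZ(\cdot,s)$ is continuous and strictly positive, hence locally bounded below. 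The only point beyond the bounded-prior analyses of \cite{Gobet2004,Nickl2017} is that the integrability in (i) and (ii) is not automatic, and is supplied precisely by \eqref{Eq:IntegrabilityCondition}.

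The density bounds come from the boundary-adapted parametrix of Section~\ref{s:altParametrix}: taking the method-of-images series as the parametrix (so that the Dirichlet, resp.\ Neumann/Robin, conditions are built in) and running the Levi iteration, one obtains, for $(a,b)\in\Lambda_\alpha$, $x,\xi\in(0,1)$ and $0<t-\tau\le T$,
\begin{equation*}
 p^{a,b}(x,t;\xi,\tau)\le c_+(a,b)\,\frac{e^{-c|x-\xi|^2/(t-\tau)}}{\sqrt{t-\tau}},\qquad \bigl|\partial_x p^{a,b}\bigr|+\bigl|\partial_\xi p^{a,b}\bigr|\le \widetilde c(a,b)\,\frac{e^{-c|x-\xi|^2/(t-\tau)}}{t-\tau},
\end{equation*}
together with a lower bound $p^{a,b}(x,t;\xi,\tau)\ge c_-(a,b)\,(t-\tau)^{-1/2}e^{-C|x-\xi|^2/(t-\tau)}$ with $c_-(a,b)>0$, where $c_+(a,b)$ and $\widetilde c(a,b)$ grow, as functions of $(m_a,\lVert(a,b)\rVert_{0,\alpha})$, like $\exp\bigl(Km_a^{1-2/\alpha}\lVert(a,b)\rVert_{0,\alpha}^{2/\alpha}\bigr)$ up to polynomial factors, with $K=K(\alpha,T)$; the exponent $2/\alpha$ and the power $m_a^{1-2/\alpha}$ are forced by the series that bounds the Levi correction kernel $\Phi$ (a Mittag--Leffler-type sum in the time increment). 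Since consecutive observation times are distinct, $\mathcal{L}^{y,s}(a,b)=\prod_{i=1}^{n-1}p^{a,b}(y_{i+1},s_{i+1};y_i,s_i)>0$ on all of $\Lambda_\alpha$, and the upper bound yields $\mathcal{L}^{y,s}(a,b)\le(\text{data constant})\cdot\exp\bigl((n-1)Km_a^{1-2/\alpha}\lVert(a,b)\rVert_{0,\alpha}^{2/\alpha}\bigr)$ up to polynomial factors. Because $\lVert(a,b)\rVert_{0,\alpha}\ge\lVert a\rVert_{L^\infty}\ge m_a$, the quantity $m_a^{1-q}\lVert(a,b)\rVert_{0,\alpha}^{q}=\lVert(a,b)\rVert_{0,\alpha}\bigl(\lVert(a,b)\rVert_{0,\alpha}/m_a\bigr)^{q-1}$ is nondecreasing in $q$ and bounds $\lVert(a,b)\rVert_{0,\alpha}$ itself, so this upper bound is dominated by $(\text{const})\,\exp\bigl(Cm_a^{1-q}\lVert(a,b)\rVert_{0,\alpha}^{q}\bigr)$ with the $q>2/\alpha$ of \eqref{Eq:IntegrabilityCondition}; hence $\cZ(y,s)=\int_{\Lambda_\alpha}\mathcal{L}^{y,s}\,d\mu_0<+\infty$ by \eqref{Eq:IntegrabilityCondition}, while $\cZ(y,s)>0$ since $\mathcal{L}^{y,s}>0$ and $\mu_0(\Lambda_\alpha)=1$. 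This settles (i).

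For (ii) and (iii) I would expand $\mathcal{L}^{y,s}-\mathcal{L}^{y',s}$ as a telescoping sum over its $n-1$ factors; a typical term is a product of at most $n-2$ transition densities (each bounded via the upper bound) times a difference $p^{a,b}(y_{i+1},\cdot;y_i,\cdot)-p^{a,b}(y'_{i+1},\cdot;y'_i,\cdot)$, which by the mean value theorem along the segment in $(0,1)^n$ and the gradient bound is $\le\widetilde c(a,b)(\dots)\,|y-y'|$. This gives $\bigl|\mathcal{L}^{y,s}(a,b)-\mathcal{L}^{y',s}(a,b)\bigr|\le L(a,b)\,|y-y'|$ with $L(a,b)$ again dominated by $(\text{const})\,\exp\bigl(Cm_a^{1-q}\lVert(a,b)\rVert_{0,\alpha}^{q}\bigr)\in L^1(\mu_0)$, the strict inequality $q>2/\alpha$ being exactly what leaves room to absorb the extra polynomial factors created by differentiation. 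Then $\bigl|\cZ(y,s)-\cZ(y',s)\bigr|\le\lVert L\rVert_{L^1(\mu_0)}|y-y'|$, so $\cZ(\cdot,s)$ is continuous and, being strictly positive, bounded below near any fixed $y$. Finally, using $(\sqrt u-\sqrt v)^2\le|u-v|$ together with the standard split
\begin{equation*}
 2\,d_{\rm H}(\mu^{y,s},\mu^{y',s})^2\le\frac{2}{\cZ(y,s)}\int_{\Lambda_\alpha}\bigl|\mathcal{L}^{y,s}-\mathcal{L}^{y',s}\bigr|\,d\mu_0+2\,\bigl|\cZ(y,s)^{-1/2}-\cZ(y',s)^{-1/2}\bigr|^2\cZ(y',s),
\end{equation*}
the bounds above give $d_{\rm H}(\mu^{y,s},\mu^{y',s})^2\le C|y-y'|$ for $y,y'$ in a fixed neighbourhood, so $y\mapsto\mu^{y,s}$ is $d_{\rm H}$-continuous (indeed locally $1/2$-Hölder).

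The principal obstacle is the package of estimates displayed in the second paragraph. The classical parametrix of \cite{Friedman1992} is built on the Gaussian kernel, which does not respect the boundary conditions, so one must reconstruct the fundamental solution around the method-of-images parametrix \cite{Renshaw2015} and, crucially, track how $m_a$ and $\lVert(a,b)\rVert_{0,\alpha}$ propagate through the iterated space--time convolutions defining $\Phi$ and through the resulting series; this is what produces the threshold $2/\alpha$ and explains why \eqref{Eq:IntegrabilityCondition} --- tails strictly thinner than Gaussian when $\alpha=1$ --- is the natural hypothesis, and checking that the constant it generates is compatible with the $C$ in \eqref{Eq:IntegrabilityCondition} is part of the argument. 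Once those bounds are in hand, the remaining manipulations are the routine Stuart-type ones above.
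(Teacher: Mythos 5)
Your overall architecture --- boundary-adapted parametrix, an exponential bound of the form $\exp\bigl(Cm_a^{1-q}\lVert(a,b)\rVert_{0,\alpha}^q\bigr)$ on the likelihood, and the Stuart-type normalisation/Hellinger machinery --- is the same as the paper's. But two of your key inputs are not delivered by that construction, and each one carries a step of your argument.

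First, your proof that $\cZ(y,s)>0$ rests on a two-sided Gaussian estimate, in particular a lower bound $p^{a,b}(x,t;\xi,\tau)\ge c_-(a,b)(t-\tau)^{-1/2}e^{-C|x-\xi|^2/(t-\tau)}$ with $c_-(a,b)>0$ claimed uniformly for $x,\xi\in(0,1)$. No such lower bound is produced by the parametrix method as used here (Theorem \ref{Thm:ParabolicGreen} gives only an upper bound, continuity, and local Lipschitz dependence on the coefficients): the representation $G=Z+\int Z\Phi$ controls the correction term only in absolute value, so positivity of $G$ is not immediate, and for all of $0<t-\tau\le T$ (rather than small increments) one would additionally need a Chapman--Kolmogorov chaining argument. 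Worse, in the absorbed (Dirichlet) case the claimed uniform lower bound is false, since $p^{a,b}(x,t;\xi,\tau)\to 0$ as $x\to\{0,1\}$. Only pointwise positivity at the fixed data points is actually needed, so your idea is repairable, but as stated it is an unproven assertion. The paper avoids this entirely: it evaluates $\mathcal{L}^{y,s}$ at the explicit pair $(a^0,b^0)=(1,0)$, where $p^{a^0,b^0}$ is the method-of-images series and is manifestly positive at interior points, and then uses local Lipschitz continuity in $(a,b)$ to find a set $A\ni(a^0,b^0)$ with $\mu_0(A)>0$ on which $\mathcal{L}^{y,s}>0$, whence $\cZ(y,s)\ge\int_A\mathcal{L}^{y,s}\,d\mu_0>0$.

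Second, your continuity-in-$y$ argument telescopes the product and applies the mean value theorem, which requires a bound on $\partial_\xi p^{a,b}$, the derivative in the \emph{backward} space variable (the factor $p^{a,b}(y_{i+1},s_{i+1};y_i,s_i)$ depends on $y_i$ through $\xi$). The parametrix $Z^a(x,t;\xi,\tau)$ depends on $\xi$ not only through $x-\xi$ but also through the frozen coefficient $a(\xi,\tau)$, which for $(a,b)\in\Lambda_\alpha$ is merely H\"older; so $\partial_\xi Z^a$ does not exist in general, and differentiability of $G^{a,b}$ in $\xi$ is genuinely in doubt at this regularity. Consequently your claimed Lipschitz (hence locally $1/2$-H\"older in Hellinger) dependence on $y$ is both stronger than the statement and unsupported by the available estimates. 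The theorem only asserts continuity, and the paper obtains it cheaply: $G^{a,b}$ is jointly continuous in $(x,t;\xi,\tau;a,b)$ and $\mathcal{L}^{y,s}(a,b)$ is dominated by a fixed $\mu_0$-integrable function of $(a,b)$, so a dominated-convergence lemma (as in \cite[Lemma 3.7]{Latz2020}) yields Hellinger continuity of $y\mapsto\mu^{y,s}$ without any derivative bounds. A minor further remark: the Levi iteration yields the exponent $\frac{1}{1-\mu}$ for any $\mu\in(1-\frac{\alpha}{2},1)$, i.e.\ any exponent strictly greater than $\frac{2}{\alpha}$, not the borderline value $\frac{2}{\alpha}$ itself; your monotonicity-in-$q$ observation should therefore be applied starting from some achievable $q'\in(\frac{2}{\alpha},q]$ rather than from $q=\frac{2}{\alpha}$.
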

	
	Our second result has important practical implications. It allows one, for instance, to obtain finite-dimensional approximations of the posterior measure given in Theorem \ref{Thm:Main_thm} using appropriate discretisations of the prior. See Section \ref{s:appex} for an example.
	
	\begin{thm}[Approximation]
		Under the assumptions of Theorem \ref{Thm:Main_thm}, suppose additionally that there exists a sequence $(a_k,b_k)\subset \Lambda_\alpha$ and a constant $C>0$ such that for all $k\in\mathbb{N}$
		\begin{equation*}
		\begin{split}
		0<m_a &\leq Cm_{a_k},\\
		\lVert (a_k,b_k)\rVert_{0,\alpha}&\leq C\lVert (a,b)\rVert_{0,\alpha},\\
		\lVert (a,b)-(a_k,b_k)\rVert_{0,\alpha}&\leq C\exp\left(C\lVert (a,b)\rVert_{0,\alpha}^q\right)\psi(k),
		\end{split}
		\end{equation*}
		with $\psi:k\in\mathbb{N}\to\mathbb{R}^+$ and $\lim_{k\to\infty}\psi(k)=0$. Then for any $k\in\mathbb{N}$ there exists a well defined probability measure $\mu_k^{y,s}$ given by
			\begin{equation*}
			\frac{d\mu_k^{y,s}}{d\mu_0}(a,b)=\frac{\mathcal{L}^{y,s}\left(a_k,b_k\right)}{\cZ_k(y,s)},
			\end{equation*}
			where $\cZ_k(y,s)=\int_{\Lambda_\alpha}\mathcal{L}^{y,s}\left(a_k,b_k\right)d\mu_0\in(0,\infty)$, satisfying
			\begin{equation*}
				d_{\rm H}(\mu^{y,s},\mu_k^{y,s})\leq C\sqrt{\psi(k)},
			\end{equation*}
			for all $k$ sufficiently large.
		\label{Thm:Approximation}
	\end{thm}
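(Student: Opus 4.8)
The plan is to use the now-standard route for establishing Hellinger stability of a posterior under perturbations of the likelihood (cf. \cite{Stuart2010}); the only genuinely non-routine point is the integrability bookkeeping forced by the unbounded prior. Throughout write $\mathcal L:=\mathcal L^{y,s}(a,b)$ and $\mathcal L_k:=\mathcal L^{y,s}(a_k,b_k)$, and let $C$ denote a positive constant that may change from line to line.

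First I would record the two quantitative inputs on the likelihood coming from the kernel estimates produced by the parametrix construction (the same ones used in the proof of Theorem~\ref{Thm:Main_thm}): (i) an upper bound $0<\mathcal L^{y,s}(a,b)\le F\big(m_a,\|(a,b)\|_{0,\alpha}\big)$ with $F$ nonincreasing in its first argument and nondecreasing in its second, this bound being $\mu_0$-integrable by \eqref{Eq:IntegrabilityCondition} (this is exactly how $\cZ<\infty$ is obtained in Theorem~\ref{Thm:Main_thm}); and (ii) a local Lipschitz estimate
\begin{equation*}
\big|\mathcal L^{y,s}(a,b)-\mathcal L^{y,s}(a',b')\big|\le G\big(m_a\wedge m_{a'},\,\|(a,b)\|_{0,\alpha}\vee\|(a',b')\|_{0,\alpha}\big)\,\|(a,b)-(a',b')\|_{0,\alpha},
\end{equation*}
with $G$ of the same monotonicity as $F$; the latter follows by telescoping the finite product in \eqref{Eq:Likelihood}, so that $G$ is a sum of $n-1$ terms, each a product of $n-2$ of the kernel upper bounds times one bound on the sensitivity of a transition density with respect to its coefficients. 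Applying the first two hypotheses, $m_{a_k}\ge m_a/C$ and $\|(a_k,b_k)\|_{0,\alpha}\le C\|(a,b)\|_{0,\alpha}$, together with the monotonicity of $F$ and $G$, gives $\mathcal L_k\le F\big(m_a/C,\,C\|(a,b)\|_{0,\alpha}\big)$, and, taking $(a',b')=(a_k,b_k)$ in (ii) and using the third hypothesis,
\begin{equation*}
|\mathcal L-\mathcal L_k|\le G\big(m_a/C,\,C\|(a,b)\|_{0,\alpha}\big)\,\|(a,b)-(a_k,b_k)\|_{0,\alpha}\le C\,G\big(m_a/C,\,C\|(a,b)\|_{0,\alpha}\big)\exp\!\big(C\|(a,b)\|_{0,\alpha}^{q}\big)\,\psi(k).
\end{equation*}
From the first of these, $\cZ_k=\int_{\Lambda_\alpha}\mathcal L_k\,d\mu_0\in(0,\infty)$: positivity since $\mathcal L_k>0$ on $\Lambda_\alpha$ and $\mu_0(\Lambda_\alpha)=1$, finiteness since $F\big(m_a/C,\,C\|(a,b)\|_{0,\alpha}\big)$ is still controlled by the integrand of \eqref{Eq:IntegrabilityCondition} (after adjusting $C$), so $\mu_k^{y,s}$ is a well-defined probability measure.

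Next I would run the usual Hellinger computation, taking $\mu_0$ as reference measure. Adding and subtracting $\sqrt{\mathcal L_k/\cZ}$ and using $(u+v)^2\le2u^2+2v^2$,
\begin{equation*}
2\,d_{\rm H}(\mu^{y,s},\mu_k^{y,s})^2=\int_{\Lambda_\alpha}\Big(\sqrt{\mathcal L/\cZ}-\sqrt{\mathcal L_k/\cZ_k}\Big)^2 d\mu_0\le\frac2{\cZ}\int_{\Lambda_\alpha}\big(\sqrt{\mathcal L}-\sqrt{\mathcal L_k}\big)^2 d\mu_0+2\cZ_k\Big(\cZ^{-1/2}-\cZ_k^{-1/2}\Big)^2.
\end{equation*}
Using the elementary inequality $(\sqrt x-\sqrt y)^2\le|x-y|$ for $x,y\ge0$, the first term is at most $\tfrac2{\cZ}\int_{\Lambda_\alpha}|\mathcal L-\mathcal L_k|\,d\mu_0$; and since $2\cZ_k\big(\cZ^{-1/2}-\cZ_k^{-1/2}\big)^2=2\cZ^{-1}\big(\sqrt{\cZ_k}-\sqrt{\cZ}\big)^2$, the second term is at most $2\cZ^{-1}|\cZ_k-\cZ|\le2\cZ^{-1}\int_{\Lambda_\alpha}|\mathcal L-\mathcal L_k|\,d\mu_0$. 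Inserting the pointwise bound on $|\mathcal L-\mathcal L_k|$ derived above,
\begin{equation*}
2\,d_{\rm H}(\mu^{y,s},\mu_k^{y,s})^2\le\frac4{\cZ}\int_{\Lambda_\alpha}|\mathcal L-\mathcal L_k|\,d\mu_0\le\frac{C}{\cZ}\Big(\int_{\Lambda_\alpha}G\big(m_a/C,\,C\|(a,b)\|_{0,\alpha}\big)\exp\!\big(C\|(a,b)\|_{0,\alpha}^{q}\big)\,d\mu_0\Big)\psi(k),
\end{equation*}
and since $\cZ\in(0,\infty)$ by Theorem~\ref{Thm:Main_thm}, this yields $d_{\rm H}(\mu^{y,s},\mu_k^{y,s})\le C\sqrt{\psi(k)}$ provided the integral in the parenthesis is finite; the qualifier ``$k$ sufficiently large'' serves only to guarantee, e.g., $\cZ_k\ge\cZ/2$ (so that $\mu_k^{y,s}$ is uniformly non-degenerate), which tidies the statement but is not needed for the bound itself.

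I expect the crux to be the finiteness of $\int_{\Lambda_\alpha}G\big(m_a/C,\,C\|(a,b)\|_{0,\alpha}\big)\exp\big(C\|(a,b)\|_{0,\alpha}^{q}\big)\,d\mu_0$, that is, showing that the product of the likelihood's local Lipschitz constant with the exponential factor coming from the third hypothesis is $\mu_0$-dominated by the integrand of \eqref{Eq:IntegrabilityCondition}. This is where the exact dependence of the parametrix bounds on the ellipticity $m_a$ and on $\|(a,b)\|_{0,\alpha}$ enters: one needs the transition-density sensitivity inside $G$ to decay in $m_a$ and to grow in $\|(a,b)\|_{0,\alpha}$ slowly enough that, after the replacements $m_{a_k}\mapsto m_a/C$ and $\|(a_k,b_k)\|_{0,\alpha}\mapsto C\|(a,b)\|_{0,\alpha}$ forced by the first two hypotheses, the resulting function of $\big(m_a,\|(a,b)\|_{0,\alpha}\big)$ alone is absorbed by $\exp\big(Cm_a^{1-q}\|(a,b)\|_{0,\alpha}^{q}\big)$ for $q>2/\alpha$ — which is exactly what the two-sided form of the first two hypotheses is designed to permit. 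Once the relevant kernel estimates from the earlier part of the paper are in hand, this integrability follows by the same exponential-moment argument that gives $\cZ<\infty$ in Theorem~\ref{Thm:Main_thm}, completing the proof.
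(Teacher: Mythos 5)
Your proof is correct and follows essentially the same route as the paper: the local Lipschitz estimate for the transition densities from Theorem \ref{Thm:ParabolicGreen}, combined with the three hypotheses (and $m_a\le 1$, so the extra factor $\exp(C\lVert (a,b)\rVert_{0,\alpha}^q)$ is absorbed), gives the pointwise bound $|\mathcal{L}-\mathcal{L}_k|\le C\exp\left(Cm_a^{1-q}\lVert (a,b)\rVert_{0,\alpha}^q\right)\psi(k)$, which is integrated against $\mu_0$ via \eqref{Eq:IntegrabilityCondition} and fed into the standard Hellinger splitting. The only (immaterial) difference is that you add and subtract $\sqrt{\mathcal{L}_k/\cZ}$ and use $(u+v)^2\le 2u^2+2v^2$, whereas the paper applies $(\sqrt{x}-\sqrt{y})^2\le|x-y|$ directly to the normalized densities and then separates the two error terms $I_1,I_2$ — both reductions control the distance by $\cZ^{-1}\int_{\Lambda_\alpha}|\mathcal{L}-\mathcal{L}_k|\,d\mu_0$ and $\cZ^{-1}|\cZ-\cZ_k|$.
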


	\begin{rem}
		In Theorem \ref{Thm:Main_thm}, the prior measure $\mu_0$ on the coefficients must satisfy the integrability condition from equation \eqref{Eq:IntegrabilityCondition} with $q>\frac{2}{\alpha}$. This result may not be sharp and is inherent to our approach in carrying the bounds through the parametrix construction. In particular this forbids the use of Gaussian measures, even when $\alpha=1$. However, it is possible to specify priors with thinner tails than Gaussian measures, as it is presented in Section \ref{sec:Application} with Exponential priors.
	\end{rem}

	\begin{rem}
		The data $y$ is taken as an element from $(0,1)^n$ to avoid technicalities related to boundary behaviour of the stochastic process. However, it is straightforward to extend our result to the specific case of absorption. Indeed, say that $y_k=0$ with $k\in\lbrace 1,\dots,n\rbrace$ is the first observation of the process $(X_t)$ being absorbed at $0$. The transition probability density function $p^{a,b}$ in the likelihood from equation \eqref{Eq:Likelihood} must be replaced with the absorption probability:
		\begin{equation*}
			\mathbb{P}\left(X_{s_k}=0\vert X_{s_{k-1}}=y_{k-1}\right).
		\end{equation*}
		Obviously, the subsequent transitions can be ignored as they are non-informative ($y_{i}=0$ for all $k\leq i\leq n$).
	\end{rem}
	
	\begin{rem}
		We assume here that we observe data without measurement error. This assumption can be easily relaxed. Indeed, suppose for instance that the data is given by the following observation model for all $i\in\lbrace 1,\dots,n\rbrace$:
		\begin{equation*}
			\tilde{y}_i=X_{s_i}+\epsilon_i,\;i\in\lbrace 1,\dots,n\rbrace,
		\end{equation*}
		where $\epsilon=\left(\epsilon_1,\dots,\epsilon_n\right)$ is a vector of independent and identically distributed real random variables with probability density function $f$. The likelihood from equation \eqref{Eq:Likelihood} is then replaced by
		\begin{equation*}
			\mathcal{L}^{\tilde{y},s}\left(a,b\right)=\prod_{i=1}^{n-1}\tilde{p}^{a,b}(\tilde{y}_{i+1},s_{i+1};\tilde{y}_i,s_i),
		\end{equation*}
		with
		\begin{equation*}
			\tilde{p}^{a,b}(x,t;\xi,\tau)=\int_{-\infty}^{+\infty}\int_{-\infty}^{+\infty}p^{a,b}(x-w,t;\xi-z,\tau)f(w)f(z)dwdz.
		\end{equation*}
		Using Theorem \ref{Thm:ParabolicGreen}, this setting leads to analogous results to Theorems \ref{Thm:Main_thm} and \ref{Thm:Approximation}. 
	\end{rem}
	
	\section{Application to Birth-and-Death processes}
	\label{sec:Application}
	
	In this section, we will consider the problem of inferring BD processes rates from discrete observations. These dynamical systems are particularly useful in queuing theory and life sciences inspired models but their likelihood involves a large system of ordinary differential equations, quickly intractable as the dimension of the system increases. One common strategy consists in approximating rescaled BD processes with a continuous process \cite{Golightly2005}, taken as a particular Langevin SDE.
	
	\subsection{SDE approximation of density-dependent BD processes}
	\label{subsec:SDEBD}
	
	Consider a BD process $(Y_t)_{t\in\mathbb{R}^+}$ with state space $\lbrace 0,\dots,N\rbrace$ where $N\in\mathbb{N}^*$ and $T>0$ as described for example in \cite{Renshaw2015}. Its dynamics is parametrised by up and down jump rates $u_k$ and $d_k$ for all $k\in\lbrace 0,\dots,N\rbrace$ and an initial condition $Y_0$. In particular, the process will almost-surely remain in $\lbrace 0,\dots,N\rbrace$ under the sufficient conditions $u_N=d_0=0$ and $Y_0\in\lbrace 0,\dots,N\rbrace$. Suppose now that these jump rates have the so-called density dependent property \cite{Ethier2005}, meaning that there exists functions $U,D:[0,1]\to\mathbb{R}$ such that:
	\begin{equation*}
	\forall k\in\lbrace 0,\dots,N\rbrace,\;\frac{u_k}{N}=U\left(\frac{k}{N}\right)\text{ and }\frac{d_k}{N}=D\left(\frac{k}{N}\right),
	\end{equation*}
	and where we assume that $U$ and $D$ are non-negative on $[0,1]$. It is well-known (see for instance \cite{Kurtz1971}) that in the limit of large $N$, the rescaled BD process $(N^{-1}Y_t)_{t\in\mathbb{R}^+}$ can be approximated by a diffusion process, obtained as the solution of the following SDE:
	\begin{equation}
	dX_t=\left(U(X_t)-D(X_t)\right)dt+\sqrt{\frac{U(X_t)+D(X_t)}{N}}dW_t,
	\label{Eq:SDE_BD}
	\end{equation}
	with initial condition $X_0=N^{-1}Y_0\in[0,1]$ and for instance, reflective boundary conditions. If now we assume furthermore that $U+D$ is positive and both functions are sufficiently regular, then equation \eqref{Eq:SDE_BD} can be considered within the Bayesian methodology developed in Section \ref{sec:MainResults}. In particular, one can consider $b:=U-D$ and $a:=\frac{U+D}{N}$ and then obtain estimators for $U,D$ as $U=\frac{Na+b}{2}$ and $D=\frac{Na-b}{2}$. However, we will follow a different route, assuming here (for simplicity) that $D$ is known. In applications such as models for the transmission of diseases, the infection process is more difficult to characterise (it involves the particular features of the disease as well as the contact pattern between individuals) while the recovery in most cases is much easier to observe. In particular, Susceptible-Infected-Susceptible (SIS) models include linear down jump rates $d_k=\gamma k$ with $\gamma>0$ the recovery rate. This leads to $D(x)=\gamma x$ \cite{Anderson1992,Kiss2017}.

	\subsection{Exponential prior measure and Bayes Theorem}
	\label{subsec:SDEprior}
	
	In this application, we are interested in estimating the up jump coefficient $U$ when $D$ is known. Specifying a distribution on $U$ thus implies a prior measure $\mu_0$ on $\Lambda_\alpha$ following the previous discussion. In particular, we will build a random function $U$ such that $(a,b)$ satisfy the following properties:
	\begin{itemize}
		\item H\"older continuity and uniform ellipticity: $\exists \alpha\in(0,1],\;(a,b)\in \Lambda_\alpha$ $\mu_0$-a.s.,
		\item Integrability of the likelihood with respect to $\mu_0$:
		\begin{equation}
		\exists q>\frac{2}{\alpha},\;\mathbb{E}^{\mu_0}\left[\exp\left(Cm_a^{1-q}\lVert (a,b)\rVert_{0,\alpha}^q\right)\right]<\infty,
		\end{equation}
	\end{itemize}
	and thus Theorem \ref{Thm:Main_thm} applies as it is and provides a well-defined posterior on $(a,b)$ (which in fact, can be seen as a posterior distribution for $U$). The random function $U$ will be built as the image of a random series through an appropriate deterministic map (the map is used to impose positivity of $a$ and the condition $U(1)=0$). Let $(f_k)_{k\in\mathbb{N}}$ be the Fourier basis of $L^2(0,1)$, that is for all $k\in\mathbb{N}$ and for all $x\in[0,1]$,
	\begin{equation*}
		\begin{split}
		f_{2k}(x)&=\sqrt{2}\cos(2k\pi x),\\
		f_{2k+1}(x)&=\sqrt{2}\sin(2k\pi x).
		\end{split}
	\end{equation*}
	Consider a sequence of independent and identically distributed random variables $(\eta_k)_{k\in\mathbb{N}}$ with density function ($\beta$-exponential random variables where $\beta >2$):
	\begin{equation*}
		\pi(x)\propto\exp\left(-\vert x\vert^\beta\right),
	\end{equation*}
	and a sequence of decreasing positive numbers $(\gamma_k)_{k\in\mathbb{N}}$ such that for some constants $c_1,c_2>0$,
	\begin{equation*}
	c_1 \,k^{-\theta}\le\gamma_k\le c_2\, k^{-\theta},
	\end{equation*}
	with $\theta>0$. Define a random series $f$ in the probability space $(\mathbb{R}^\infty,\mathbb{B}(\mathbb{R}^\infty),\mathbb{P}_0)$ with $\mathbb{B}(\mathbb{R}^\infty)$ the product $\sigma$-algebra on $\mathbb{R}^\infty$, as follows:
	\begin{equation}
	f=\sum_{k\geq 0}\gamma_k\eta_kf_k.
	\label{Eq:RandomSeries}
	\end{equation}
	Furtheremore, define the following Hilbert scale of Sobolev spaces:
	\begin{equation*}
	H^l(0,1)=\left\lbrace \sum_{k\geq 0}u_kf_k,\;\sum_{k\geq 0}k^{2l}u_k^2<\infty\right\rbrace,
	\end{equation*}
	with norms $\lVert\sum_{k\geq 0}u_kf_k\rVert_{H^l}=\left(\sum_{k\geq 0}k^{2l}u_k^2\right)^\frac{1}{2}$ and where for instance $H^0=L^2$ and for integer values of $l$ one recovers the usual $H^l$ Sobolev spaces. Immediate properties of this random series are given in the following proposition, with in particular an exponential moment of order $\beta$.
	\begin{prop}
		Let $f$ be defined as in equation \eqref{Eq:RandomSeries} and suppose that $\beta>2$, $\alpha\in(0,1]$, $l> \alpha+\frac{1}{2}$ and $\theta>2l+1-2\beta^{-1}$ then there exists $C>0$ such that:
		\begin{equation*}
			\mathbb{E}\left[\exp\left(C\lVert f\rVert_{H^l}^\beta\right)\right]<+\infty.
		\end{equation*}
		In particular, one has for all $\alpha\in\left(0,1\right]$:
		\begin{itemize}
			\item[i.] $f\in\mathcal{C}^{0,\alpha}$ $\mathbb{P}_0$-almost-surely,
			\item[ii.] there exists $C>0$ such that $\mathbb{E}\left[\exp\left(C\lVert f\rVert_{0,\alpha}^\beta\right)\right]<+\infty$.
		\end{itemize}
	\label{Prop:RandomSeries}
	\end{prop}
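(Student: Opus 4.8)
The plan is to first establish the exponential-moment bound $\mathbb{E}\!\left[\exp(C\|f\|_{H^l}^\beta)\right]<\infty$, and then to read off (i)--(ii) from the one-dimensional Sobolev embedding $H^l(0,1)\hookrightarrow\mathcal{C}^{0,\alpha}([0,1])$, which holds precisely because $l>\alpha+\tfrac12$ (with the harmless caveat that the $H^l$ seminorm used here ignores the constant mode $f_0\equiv\sqrt2$, which affects only the $L^\infty$ part of the Hölder norm and is dealt with separately below).

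Since the weight $k^{2l}$ vanishes at $k=0$, we have $\|f\|_{H^l}^2=\sum_{k\ge1}k^{2l}\gamma_k^2\eta_k^2$, which is independent of $\eta_0$. Set $b_k:=k^l\gamma_k$, so $b_k\le c_2k^{l-\theta}$; the hypotheses give $l>\alpha+\tfrac12>\tfrac12$ and, a fortiori from $\theta>2l+1-2\beta^{-1}$, that $\theta>l+\tfrac12$, hence $2(l-\theta)<-1$ and $S:=\sum_{k\ge1}b_k^2\le c_2^2\sum_{k\ge1}k^{2(l-\theta)}<\infty$. Writing $\|f\|_{H^l}^2=S\sum_{k\ge1}(b_k^2/S)\,\eta_k^2$ and applying Jensen's inequality to the convex map $t\mapsto t^{\beta/2}$ (convex since $\beta>2$) with the probability weights $b_k^2/S$ gives
\[
\|f\|_{H^l}^\beta\le S^{\beta/2-1}\sum_{k\ge1}b_k^2\,|\eta_k|^\beta .
\]
A one-line computation with the density $\pi(x)\propto e^{-|x|^\beta}$ yields $\mathbb{E}[e^{t|\eta_k|^\beta}]=(1-t)^{-1/\beta}$ for $t<1$, so by independence of the $\eta_k$,
\[
\mathbb{E}\!\left[e^{C\|f\|_{H^l}^\beta}\right]\le\prod_{k\ge1}\bigl(1-CS^{\beta/2-1}b_k^2\bigr)^{-1/\beta},
\]
whenever $C$ is small enough that $CS^{\beta/2-1}b_k^2\le\tfrac12$ for all $k$ (possible since $\sup_{k\ge1}b_k^2\le c_2^2$). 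Using $-\log(1-x)\le2x$ on $[0,\tfrac12]$, the logarithm of the right-hand side is at most $\tfrac2\beta\,CS^{\beta/2-1}\sum_{k\ge1}b_k^2=\tfrac2\beta\,CS^{\beta/2}<\infty$, which is the asserted bound.

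For (i): the bound (or, more directly, $\mathbb{E}\|f\|_{H^l}^2=\mathbb{E}[\eta_1^2]\,S<\infty$) forces $\|f\|_{H^l}<\infty$ $\mathbb{P}_0$-a.s., so the series converges in $H^l(0,1)$ a.s.; adding the a.s.\ finite constant mode $\sqrt2\,\gamma_0\eta_0$ and invoking the Sobolev embedding places $f$ in $\mathcal{C}^{0,\alpha}([0,1])$ a.s. For (ii): from $\|f\|_{0,\alpha}\le\sqrt2\,\gamma_0|\eta_0|+C_{\rm emb}\|f\|_{H^l}$ and $(x+y)^\beta\le2^{\beta-1}(x^\beta+y^\beta)$, together with the independence of $\eta_0$ from $\|f\|_{H^l}$, one gets
\[
\mathbb{E}\!\left[e^{C'\|f\|_{0,\alpha}^\beta}\right]\le\mathbb{E}\!\left[e^{2^{\beta-1}C'(\sqrt2\,\gamma_0)^\beta|\eta_0|^\beta}\right]\cdot\mathbb{E}\!\left[e^{2^{\beta-1}C'C_{\rm emb}^\beta\|f\|_{H^l}^\beta}\right],
\]
and both factors are finite once $C'$ is small enough: the first because $\mathbb{E}[e^{c|\eta_0|^\beta}]<\infty$ for $c<1$, the second by the bound just proved.

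The only genuinely non-routine step is upgrading the elementary polynomial moments of $\|f\|_{H^l}$ to a true exponential moment of order $\beta$: this forces the factorisation over the independent coordinates $\eta_k$, the use of the explicit moment generating function $(1-t)^{-1/\beta}$ of the $\beta$-exponential law, and a careful calibration of the small constant $C$ against $\sup_k k^{2l}\gamma_k^2$; the convexity needed for the Jensen step is exactly what requires $\beta>2$. By contrast, the condition $\theta>2l+1-2\beta^{-1}$ is used only to guarantee the summability $\sum_{k\ge1}k^{2l}\gamma_k^2<\infty$, and $l>\alpha+\tfrac12$ only through the Sobolev embedding.
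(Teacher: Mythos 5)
Your proof is correct, and at the one genuinely delicate step it takes a different route from the paper. Both arguments reduce $\lVert f\rVert_{H^l}^\beta$ to a weighted sum $\sum_k c_k\vert\eta_k\vert^\beta$ and then exploit independence together with the exact identity $\mathbb{E}[e^{t\vert\eta_k\vert^\beta}]=(1-t)^{-1/\beta}$, $t<1$. The difference is how the reduction is made: the paper splits $k^{2(l-\theta)}\eta_k^2=k^{2l-\theta}\cdot k^{-\theta}\eta_k^2$ and applies H\"older with exponents $(\tfrac{\beta}{2},\tfrac{\beta}{\beta-2})$, which is precisely what forces the hypothesis $\theta>2l+1-2\beta^{-1}$ (so that $\sum_k k^{\beta(2l-\theta)/(\beta-2)}<\infty$); you instead normalise the weights $b_k^2/S$ and apply Jensen to $t\mapsto t^{\beta/2}$, which only needs $S=\sum_k k^{2l}\gamma_k^2<\infty$, i.e.\ $\theta>l+\tfrac12$. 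Since $l>\tfrac12$ and $\beta>2$ give $2l+1-2\beta^{-1}>l+\tfrac12$, your argument establishes the proposition under a strictly weaker condition on $\theta$ (both versions of course need $\beta>2$, for H\"older in one case and for convexity of $t^{\beta/2}$ in the other). Your treatment is also more careful on two minor points the paper glosses over: the $k=0$ mode, which the stated $H^l$ ``norm'' does not see (it is really a seminorm), and which you correctly strip off and reinsert via $\eta_0$ when passing to the $\mathcal{C}^{0,\alpha}$ bound in (ii); and the explicit calibration of the constant $C$ against $\sup_k k^{2l}\gamma_k^2$ so that each factor $(1-\cdot)^{-1/\beta}$ is defined. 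The conclusion of (i) and (ii) via the Sobolev embedding $H^l\hookrightarrow\mathcal{C}^{0,\alpha}$ for $l>\alpha+\tfrac12$ is the same in both.
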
	
\begin{rem}
	We note that for the Fourier series considered in equation \eqref{Eq:RandomSeries} we have
	\begin{align*}
	|f_k(x)|&\le 1,\\
	|f_k(x)-f_k(y)|&\le Ck|x-y|,
	\end{align*}
	and hence by Theorem 2.8 of \cite{Dashti2015}, part (i) of the above proposition follows from the less stringent condition of $\theta>\alpha+\frac{1}{\beta}$. 
	It is for the result of part (ii)  that we choose $\theta$ sufficiently large so that a straightforward adaptation (through Sobolev embedding of $H^l$ in the H\"older space of our interest) of the method used in \cite{Lassas2009}, works here. 
\end{rem}

We now consider $g:\mathbb{R}\to \mathbb{R}$ and $h:[0,1]\to \mathbb{R}$ as follows
	\begin{align*}
	&g(x):=
	\left\lbrace
	\begin{array}{l}
	x,\;x\geq 1,\\ 
	\frac{1}{2-x},\;x\leq 0,\\
	\tilde{g}(x),\;0<x<1,
	\end{array} 
	\right.\\
	&h(x):=1-\exp(x-1),
	\end{align*}
	with $\tilde{g}$ is chosen such that $g$ is positive, increasing, infinitely differentiable. Note in particular that $g$ has an inverse. Then the coefficient $U$ will be modelled as
	\begin{equation}
		U(x)=g(f(x))h(x),\;x\in[0,1].
		\label{Eq:U}
	\end{equation}
We hance have by this construction that $U\sim \nu_0$ with probability measure $\nu_0$ satisfying $\nu_0(C_0^{0,\alpha})=1$ where 
$$
C_0^{0,\alpha}:=\{U\in C^{0,\alpha}: U(x)>0 \mbox{ for }x\in[0,1), U(1)=0 \}.
$$ 
 Now, using $D(x)=\gamma x$ with $\gamma>0$ (known), both drift and diffusion coefficients are fully specified:
	\begin{equation}
	\begin{split}
	a(x)&=\frac{U(x)+D(x)}{N},\\
	b(x)&=U(x)-D(x).
	\end{split}
	\label{Eq:abU}
	\end{equation}
	In the following we let
	$$
	\mathcal{L}^{y,s}_{BD}(U):=\mathcal{L}^{y,s}((U+D)/N,U-D).
	$$
	The next proposition is an analogue of Theorem \ref{Thm:Main_thm} in this particular context.
	
	\begin{prop}
		Let $\alpha\in(0,1]$, $l>\alpha+\frac{1}{2}$, $q>\frac{2}{\alpha}$, $\beta>2q-1$ and $\theta>2l+1-2\beta^{-1}$. Suppose that $U\sim\nu_0$ is defined as in equation \eqref{Eq:U} and $D(x)=\gamma x$ with $\gamma >0$. 
Let $y$ and $s$ be as in Theorem \ref{Thm:Main_thm}. Then there exists a unique posterior measure $\nu^{y,s}$ over $C_0^{0,\alpha}$ given by
		\begin{equation}
			\frac{d\nu^{y,s}}{d\nu_0}(U)=\frac{\mathcal{L}^{y,s}_{BD}(U)}{\cZ^\nu(y,s)},
			\label{Eq:RadonNikodymU}
		\end{equation}
where $\cZ^{\nu}(y,s):=\int_{C^{0,\alpha}_0}\mathcal{L}^{y,s}_{BD}(U)d\nu_0\in\left(0,+\infty\right)$.
Moreover, the measure $\nu^{y,s}$ is continuous  in $y$ in Hellinger's metric. 
		\label{Prop:WellposedU}
	\end{prop}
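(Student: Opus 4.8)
The plan is to deduce Proposition~\ref{Prop:WellposedU} from Theorem~\ref{Thm:Main_thm} by exhibiting the push-forward construction $U\mapsto(a,b)=((U+D)/N,\,U-D)$ as a measurable map under which the hypotheses of the abstract theorem are inherited. First I would fix notation: let $\Phi:C_0^{0,\alpha}\to \Lambda_\alpha$ be $\Phi(U)=((U+D)/N,U-D)$, and let $\mu_0:=\Phi_*\nu_0$ be the induced prior on $\Lambda_\alpha$. Since $D(x)=\gamma x$ is a fixed smooth function, $\Phi$ is affine and continuous (in fact Lipschitz) from $C^{0,\alpha}$ to $C^{0,\alpha}([0,1]\times[0,T])^2$ (the coefficients being time-independent), so it is Borel measurable; and $\mathcal L^{y,s}_{BD}(U)=\mathcal L^{y,s}(\Phi(U))$ by definition. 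Thus it suffices to check that $\mu_0(\Lambda_\alpha)=1$ and that the integrability condition \eqref{Eq:IntegrabilityCondition} holds for $\mu_0$, after which Theorem~\ref{Thm:Main_thm} yields a unique posterior $\mu^{y,s}$ on $\Lambda_\alpha$, and its pull-back $\nu^{y,s}:=(\Phi^{-1})_*\mu^{y,s}$ (well-defined because $\Phi$ is injective onto its image, $U$ being recoverable as $U=(Na+b)/2$) is the asserted posterior on $C_0^{0,\alpha}$ satisfying \eqref{Eq:RadonNikodymU}; Hellinger continuity in $y$ transfers verbatim since the likelihoods coincide and $\cZ^\nu(y,s)=\cZ(y,s)$.

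The substantive step is verifying the two conditions. For the support statement, by Proposition~\ref{Prop:RandomSeries}(i) we have $f\in C^{0,\alpha}$ $\mathbb P_0$-a.s.; since $g$ is $C^\infty$ hence locally Lipschitz, $g\circ f\in C^{0,\alpha}$, and $h\in C^\infty([0,1])$, so $U=(g\circ f)\,h\in C^{0,\alpha}$ a.s.; moreover $g>0$ and $h(x)=1-e^{x-1}>0$ on $[0,1)$ with $h(1)=0$, giving $U(x)>0$ on $[0,1)$ and $U(1)=0$, i.e. $U\in C_0^{0,\alpha}$ a.s., which is exactly $\nu_0(C_0^{0,\alpha})=1$. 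Applying $\Phi$: $a=(U+\gamma x)/N$ and $b=U-\gamma x$ are $C^{0,\alpha}$ (time-independent), and $m_a=\min_{[0,1]}(U+\gamma x)/N\ge \gamma/N\cdot\min_{[0,1]}x$ — more carefully, $m_a\ge \gamma/N>0$ is false at $x=0$, so I would instead note $a(x)=(U(x)+\gamma x)/N$ with $U(0)=g(f(0))h(0)\ge g(f(0))(1-e^{-1})>0$ so $a(0)>0$, and $a$ is continuous and strictly positive on the compact $[0,1]$ since $U\ge0$ everywhere and $U>0$ on $[0,1)$ while $a(1)=\gamma/N>0$; hence $m_a>0$ a.s. and $\mu_0(\Lambda_\alpha)=1$.

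The main obstacle is the integrability condition: I must produce $C>0$ and $q>2/\alpha$ with $\mathbb E^{\mu_0}[\exp(Cm_a^{1-q}\|(a,b)\|_{0,\alpha}^q)]<\infty$, i.e. $\mathbb E[\exp(C\,m_a^{1-q}\,\|\Phi(U)\|_{0,\alpha}^q)]<\infty$ under $U\sim\nu_0$. The strategy is to bound $m_a^{1-q}$ and $\|\Phi(U)\|_{0,\alpha}$ polynomially and then exponentially in $\|f\|_{H^l}$, reducing to the $\beta$-exponential moment of Proposition~\ref{Prop:RandomSeries}. Concretely: $\|(a,b)\|_{0,\alpha}\le C(1+\|U\|_{0,\alpha})\le C(1+\|g\circ f\|_{0,\alpha})\le C(1+\|f\|_{0,\alpha}+\|f\|_{0,\alpha}^{?})$ — here one uses that $g$ grows at most linearly at $+\infty$ ($g(x)=x$ for $x\ge1$) and is bounded between $0$ and $1$ for $x\le1$ (since $g(x)=1/(2-x)\le1$ there and $\tilde g$ maps $(0,1)$ into a bounded set), so $\|g\circ f\|_{L^\infty}\le C(1+\|f\|_{L^\infty})$ and the H\"older seminorm of $g\circ f$ is controlled by $\mathrm{Lip}(g\text{ on the relevant range})\cdot[f]_{0,\alpha}\le C(1+\|f\|_{L^\infty})[f]_{0,\alpha}$, giving $\|U\|_{0,\alpha}\le C(1+\|f\|_{0,\alpha})^2$. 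For the lower bound on $m_a$: the only place $a$ can be small is near $x=1$ where $a(1)=\gamma/N$, and near $x=0$ where $a(0)=U(0)/N=g(f(0))(1-e^{-1})/N$; since $g(x)\ge 1/(2-x)\ge 1/2$ for $x\le 0$ and $g(x)\ge g(0)>0$ for $x\ge0$ by monotonicity, in fact $g\ge \min(1/2,g(0))=:c_g>0$ everywhere, so $a(x)=(g(f(x))h(x)+\gamma x)/N\ge (c_g\,h(x)+\gamma x)/N$, and since $c_g h(x)+\gamma x$ is continuous and strictly positive on $[0,1]$ (at $x=1$ it equals $\gamma>0$), $m_a\ge c_0/N>0$ \emph{deterministically}, with $c_0$ independent of $f$. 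This is the crucial gain: $m_a^{1-q}\le (c_0/N)^{1-q}$ is a constant, so the condition reduces to $\mathbb E[\exp(C'\|f\|_{0,\alpha}^{2q})]<\infty$ for a suitable $C'$, which by Proposition~\ref{Prop:RandomSeries}(ii) holds provided $2q\le\beta$, i.e. $\beta\ge 2q$; the hypothesis $\beta>2q-1$ is slightly weaker, so I would either sharpen the H\"older bound on $g\circ f$ to linear growth (exploiting $g(x)=x$ exactly for $x\ge1$ and uniform boundedness otherwise, so that the H\"older seminorm contribution from the unbounded regime is genuinely $O(\|f\|_{0,\alpha})$ rather than $O(\|f\|_{0,\alpha}^2)$, yielding $\|U\|_{0,\alpha}\le C(1+\|f\|_{0,\alpha})$ after a careful split), reducing the requirement to $q\le\beta$, hence $\beta>2q-1>q$ for $q>1$ suffices — or track constants so that the exponent $q$ appearing can be taken marginally below $\beta$. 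Either way, the exponential-moment input from Proposition~\ref{Prop:RandomSeries}(ii) closes the estimate, and the proposition follows.
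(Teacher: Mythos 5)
Your overall architecture---push the prior forward under $U\mapsto((U+D)/N,\,U-D)$, verify the support and integrability hypotheses of Theorem \ref{Thm:Main_thm} for the induced measure on $\Lambda_\alpha$, and pull the posterior back to $C_0^{0,\alpha}$---is exactly the paper's route, and your support verification ($U\in C_0^{0,\alpha}$ a.s., $a$ continuous and strictly positive on the compact $[0,1]$, hence $m_a>0$ a.s.) and the final identification $\cZ^\nu=\cZ$ with transfer of Hellinger continuity are fine.

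However, the ``crucial gain'' on which your integrability argument rests is false. You claim $g(x)\ge 1/(2-x)\ge 1/2$ for $x\le 0$, hence $g\ge c_g>0$ uniformly and $m_a\ge c_0/N$ \emph{deterministically}. The inequality goes the other way: for $x\le 0$ one has $2-x\ge 2$, so $g(x)=1/(2-x)\le 1/2$, and $g(x)\to 0$ as $x\to-\infty$ (this is forced by the construction, since $g$ must map $\mathbb{R}$ onto the positive half-line to impose positivity of $U$ without a deterministic floor). Consequently $a(0)=g(f(0))(1-e^{-1})/N$ is arbitrarily small when $f(0)$ is very negative, and the correct estimate---the one the paper uses---is the random bound $m_a\ge C(1+\lVert f\rVert_{\infty})^{-1}\ge C(1+\lVert f\rVert_{H^l})^{-1}$. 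The factor $m_a^{1-q}$ therefore contributes $(1+\lVert f\rVert_{H^l})^{q-1}$ inside the exponential, which combined with $\lVert (a,b)\rVert_{0,\alpha}^q\le C(1+\lVert f\rVert_{H^l})^q$ (note that $g$ is \emph{globally} Lipschitz, since $g'$ equals $1$ on $[1,\infty)$, is at most $1/4$ on $(-\infty,0]$ and is continuous on $[0,1]$; so the H\"older bound on $g\circ f$ is linear in $\lVert f\rVert_{0,\alpha}$, not quadratic as you feared) yields the exponent $2q-1$. This is precisely why the hypothesis reads $\beta>2q-1$: Proposition \ref{Prop:RandomSeries} then supplies $\mathbb{E}[\exp(C\lVert f\rVert_{H^l}^{2q-1})]<\infty$. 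Your closing discussion of whether $\beta>2q-1$ versus $\beta\ge 2q$ suffices is an artefact of two compensating mistakes (a lower bound on $m_a$ that is too strong and an upper bound on $\lVert g\circ f\rVert_{0,\alpha}$ that is too weak); once both are corrected, the stated hypothesis is exactly what closes the estimate.
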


The following proposition shows that finding the modes of the posterior measure $\nu^{y,s}$ can be characterised as a variational problem. 
It follows by the same arguments as those of \cite{Helin2015} for the case of differentiable Besov prior measures (note that we are using a Fourier basis here).
See Section \ref{s:BDproofs} for a short proof explaining the connections.

\begin{prop}\label{p:MAPs}
Under the assumptions of Proposition \ref{Prop:WellposedU}, the MAP estimators of posterior measure $\nu^{y,s}$ coincide with the minimisers of the functional
\begin{equation}\label{e:MAPs}
 -\log \mathcal{L}^{y,s}_{BD}(U)+\|g^{-1}(U/h)\|_{E}^\beta,
\end{equation}
over 
$$
E:=\Big\{u=\sum_{k\ge 1}u_kf_k\in L^2:\sum_{k\ge 0}\gamma_k^{-\beta}|u_k|^\beta<\infty\Big\},
$$
with $\|u\|_E^\beta:=\sum_{k\ge 0}\gamma_k^{-\beta}|u_k|^\beta$.
\end{prop}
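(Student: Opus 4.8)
The plan is to lift the problem to the coefficient space of the random series $f$, invoke the characterisation of MAP estimators for differentiable $\beta$-exponential prior measures from \cite{Helin2015}, and then push the result back through the map $U=g(f)h$. To set this up, observe that $\nu^{y,s}$ is the pushforward under $T:f\mapsto g(f)h$ of the measure $\pi^{y,s}$ on the space of random series given by $d\pi^{y,s}/d\mathbb{P}_0(f)\propto\exp(-\Phi(f))$ with $\Phi(f):=-\log\mathcal{L}^{y,s}_{BD}(g(f)h)$. Written in the Fourier coefficients $u=(u_k)$ of $f=\sum_k u_kf_k$, the prior $\mathbb{P}_0$ is the product measure with marginals $\propto\exp(-\gamma_k^{-\beta}|u_k|^\beta)$, i.e.\ a $\beta$-exponential (Besov-type) prior with formal density $\propto\exp(-\|u\|_E^\beta)$; since $\beta>2$ this prior is differentiable in the sense of \cite{Helin2015}, with ``Cameron--Martin'' space $E$ and functional $\|\cdot\|_E^\beta$ exactly as in the statement. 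By Proposition \ref{Prop:RandomSeries}, $\mathbb{P}_0$ lives on $H^l$, which embeds in $C^{0,\alpha}$, so the argument can be run on the separable Banach space $C^{0,\alpha}$.

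Next I would check that $\Phi$ satisfies the hypotheses of the Onsager--Machlup/MAP theory, namely that it is locally Lipschitz and bounded below on bounded subsets of $C^{0,\alpha}$. This follows from the parametrix estimates on $p^{a,b}$ (as in Theorem \ref{Thm:ParabolicGreen} and the proof of Theorem \ref{Thm:Main_thm}) together with the fact that $f\mapsto(a,b)=\big((g(f)h+D)/N,\,g(f)h-D\big)$ is locally Lipschitz from $C^{0,\alpha}$ into $\Lambda_\alpha$ with $m_a$ bounded below on bounded sets: indeed $g\in C^\infty$ with $g'>0$, $h$ and $D$ are smooth, composition with smooth functions and multiplication by fixed smooth functions are locally Lipschitz operations on $C^{0,\alpha}$, and on $\{\|f\|_{0,\alpha}\le R\}$ one has $g(f)\ge g(-R)>0$ so that $a=(g(f)h+\gamma x)/N$ stays bounded away from $0$. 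Combined with the integrability $\exp(-\Phi)\in L^1(\mathbb{P}_0)$ furnished by Proposition \ref{Prop:WellposedU}, the results of \cite{Helin2015}, adapted from the wavelet to the Fourier basis, then identify the MAP estimators of $\pi^{y,s}$ with the minimisers over $E$ of
\[
 I(f):=\Phi(f)+\|f\|_E^\beta=-\log\mathcal{L}^{y,s}_{BD}(g(f)h)+\|f\|_E^\beta .
\]

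Finally I would transfer this from $\pi^{y,s}$ to $\nu^{y,s}=T_\ast\pi^{y,s}$. The map $T$ is a homeomorphism of $C^{0,\alpha}$ onto its image $V=\{g(f)h:f\in C^{0,\alpha}\}$ which, restricted to bounded sets, is bi-Lipschitz for the $C^{0,\alpha}$ metric: the estimate $\|T(f_1)-T(f_2)\|_{0,\alpha}\le C\|f_1-f_2\|_{0,\alpha}$ is immediate, while for the reverse inequality one uses the identity $T(f)/h=g(f)$ and the local Lipschitz continuity of $g^{-1}\in C^\infty$; the only delicate point is the vanishing of $h$ at $x=1$, which is harmless since there $T(f_i)(x)=\big(g(f_i(1))+O((1-x)^\alpha)\big)h(x)$, so $T(f_1)-T(f_2)$ still controls $g(f_1)-g(f_2)$ near $x=1$. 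Because $T$ maps metric balls to comparable metric balls, the small-ball ratios defining the MAP estimators of $\nu^{y,s}$ at $\hat U=T(\hat f)$ coincide in the limit with those of $\pi^{y,s}$ at $\hat f$; hence $\hat U$ is a MAP estimator of $\nu^{y,s}$ if and only if $\hat f$ minimises $I$. Substituting $f=g^{-1}(U/h)$ turns $I$ into $-\log\mathcal{L}^{y,s}_{BD}(U)+\|g^{-1}(U/h)\|_E^\beta$ and turns the constraint $f\in E$ into $g^{-1}(U/h)\in E$, which is the assertion. I expect this last transfer --- in particular the bi-Lipschitz estimate at the degeneracy of $h$ at $x=1$, and verifying that the nonlinear change of variables preserves the very notion of MAP estimator and not merely the Onsager--Machlup functional --- to be the main obstacle.
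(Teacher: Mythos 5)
Your overall route is the same as the paper's: verify that the $\beta$-exponential prior is differentiable in the sense of Helin--Burger, identify $E$ and $\|\cdot\|_E^\beta$ as the relevant Cameron--Martin-type data, and invoke their Corollary~2 (together with \cite{Lie2018a}) to read off the Onsager--Machlup functional. The paper does exactly this, computing the logarithmic derivative of $\nu_0$ along directions $w\in H^{\theta}$ coefficientwise and then citing \cite[Corollary 2]{Helin2015} and \cite{Lie2018a} to conclude directly for $\nu^{y,s}$. The difference is that the paper carries this out for $\nu_0$ in the coefficient parametrisation in one step, whereas you first characterise the MAP estimators of the posterior on $f$ and then try to transport them through $T:f\mapsto g(f)h$.

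That last transfer step is where your argument has a genuine gap, and it is twofold. First, $T^{-1}$ is not Lipschitz on bounded sets of $C^{0,\alpha}$: take $\phi=g(f_1)-g(f_2)$ to be a bump of height $1$ supported in $[1-\epsilon,1]$, so that $\|\phi\|_{0,\alpha}\asymp\epsilon^{-\alpha}$ while, since $h\lesssim\epsilon$ on that interval, $\|\phi h\|_{0,\alpha}\lesssim\epsilon^{1-\alpha}$; the ratio degenerates as $\epsilon\to0$, so the vanishing of $h$ at $x=1$ is not harmless and the claimed reverse Lipschitz bound fails. Second, and more fundamentally, even a genuinely bi-Lipschitz $T$ would only give the sandwich $\pi^{y,s}\bigl(B_{\delta/L}(\hat f)\bigr)\le\nu^{y,s}\bigl(B_\delta(\hat U)\bigr)\le\pi^{y,s}\bigl(B_{L\delta}(\hat f)\bigr)$, and since small-ball probabilities of such measures are exponentially sensitive in $\delta$, the ratio $\pi^{y,s}(B_{L\delta})/\pi^{y,s}(B_{\delta/L})$ does not tend to $1$; comparability of balls therefore does not let you conclude that the normalised small-ball ratios defining MAP estimators coincide in the limit. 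You correctly flag this as the main obstacle, but it is not resolved by the ball-comparison argument as written. To close it you would need either an Onsager--Machlup/logarithmic-derivative computation for $\nu_0$ itself in the given parametrisation --- which is what the paper's proof supplies --- or to define and characterise the MAP estimator through that parametrisation from the outset.
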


	\subsection{Approximation}\label{s:appex}
	
	In the previous section, we have constructed a prior distribution on $\Lambda_\alpha$ by first randomising $U$ using a deterministic transform of the random series $f$. Here, we consider the approximation which consists in keeping a finite number of components in this random series. We define for this purpose the following approximations:
	\begin{equation}
		\forall k\in\mathbb{N},\;U_k=g\left(P_kf\right)h,
		\label{Eq:ApproxU}
	\end{equation}
	where $P_kf=\sum_{i=0}^{k}\gamma_i\eta_if_i$. This naturally translates into a sequence of approximations for the drift and diffusion coefficients for all $k\in\mathbb{N}$:
	\begin{equation*}
		\begin{split}
		a_k&=\frac{U_k+D}{N},\\
		b_k&=U_k-D.
		\end{split}
	\end{equation*}
	The next proposition shows how this approximation affects the posterior measure $\nu^{y,s}$.
	\begin{prop}
	Let the assumptions of Proposition \ref{Prop:WellposedU} hold, and suppose that $\{U_k\}\subset C^{0,\alpha}_0$ be defined as in equation (\ref{Eq:ApproxU}). Then for any $k\in\mathbb{N}$ there exists a well-defined probability measure $\nu^{y,s}_k$,
\begin{equation*}
			\frac{d\nu^{y,s}_k}{d\nu_0}(U)=\frac{\mathcal{L}^{y,s}_{BD}(U_k)}{\cZ^\nu(y,s)},
		\end{equation*}
with $\cZ_k^{\nu}(y,s):=\int_{C^{0,\alpha}_0}\mathcal{L}^{y,s}_{BD}(U_k)d\nu_0\in\left(0,+\infty\right)$, and we have
		\begin{equation*}
			d_{\rm H}(\nu^{y,s},\nu_k^{y,s})\leq Ck^{\alpha+\frac{1}{2}-l}
		\end{equation*}
		for $k$ sufficiently large.
		\label{Prop:ApproxU}
	\end{prop}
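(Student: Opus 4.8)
The plan is to deduce Proposition \ref{Prop:ApproxU} from the abstract Approximation Theorem \ref{Thm:Approximation}, translated into the present Birth-and-Death setting, by verifying its three hypotheses for the sequence $(a_k,b_k)=((U_k+D)/N,\,U_k-D)$ with $\psi(k)=k^{2(\alpha+1/2-l)}$, so that $\sqrt{\psi(k)}=k^{\alpha+1/2-l}$ gives the claimed rate. First I would note that since $U_k=g(P_kf)h$ and $U=g(f)h$, the difference $U-U_k$ is controlled by $f-P_kf$: because $g$ is $C^1$ and, on the relevant range, locally Lipschitz, and $h$ is bounded in $C^{0,\alpha}$, one gets $\|(a,b)-(a_k,b_k)\|_{0,\alpha}\le C(\|f\|_{0,\alpha})\,\|f-P_kf\|_{0,\alpha}$ where the constant depends on $f$ only through a polynomial or exponential in $\|f\|_{0,\alpha}$ (coming from the local Lipschitz constant of $g$ and the value of $f$ where it is evaluated). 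The key quantitative input is the tail estimate
\begin{equation*}
\|f-P_kf\|_{0,\alpha}\le C\,\Big\|\sum_{i>k}\gamma_i\eta_if_i\Big\|_{H^l}\,\cdot\, k^{\alpha+\frac12-l},
\end{equation*}
which follows from the Sobolev embedding $H^l\hookrightarrow C^{0,\alpha}$ together with the standard fact that the $H^l$-norm of a high-frequency tail truncated at level $k$ is, after accounting for the $k^{2l}$ weights, bounded by $k^{\alpha+1/2-l}$ times the full $H^l$-norm of the series (this is exactly the projection-error rate in a Hilbert scale). Combining, $\|(a,b)-(a_k,b_k)\|_{0,\alpha}\le C\exp(C\|f\|_{H^l}^\beta)\,k^{\alpha+1/2-l}$, which matches the third hypothesis of Theorem \ref{Thm:Approximation} with $q$ as in Proposition \ref{Prop:WellposedU} (using $\beta>2q-1$ and the exponential moment bound of Proposition \ref{Prop:RandomSeries} to absorb the prefactor); strictly, one should be slightly careful that the exponent on $\|f\|$ appearing here is at most $\beta$, which is guaranteed by the choice $\beta>2q-1\ge q$.

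Next I would check the first two hypotheses. For the norm bound $\|(a_k,b_k)\|_{0,\alpha}\le C\|(a,b)\|_{0,\alpha}$: since $P_k$ is not uniformly bounded on $C^{0,\alpha}$, I would instead argue through $H^l$, where $P_k$ is an orthogonal projection hence a contraction, and then use $\|g(P_kf)h\|_{0,\alpha}\le C(1+\|P_kf\|_{H^l})\le C(1+\|f\|_{H^l})$, comparing both sides to the $H^l$-norm of $f$; alternatively one observes that only an upper bound up to a constant is needed and $U_k$ inherits boundedness from the construction of $g$ (which grows at most linearly) and the embedding. For the ellipticity comparison $m_a\le Cm_{a_k}$: here $a=(U+D)/N$ with $U=g(f)h\ge 0$ and $D(x)=\gamma x$, so $m_a=\min_x(g(f(x))h(x)+\gamma x)/N$, and similarly for $a_k$; since $g>0$ and $h\ge 0$ on $[0,1)$ with $h(1)=0$, and $D(1)=\gamma>0$, the minimum of $a$ is strictly positive and is comparable to that of $a_k$ uniformly in $k$ — this needs the observation that $g(P_kf)\to g(f)$ uniformly (by $H^l\hookrightarrow C^0$ convergence) so the minima converge, but for a clean constant one can also note $m_{a_k}\ge \gamma/N>0$ near $x=1$ while away from $1$ the continuity of $g$ and boundedness of $P_kf$ give a uniform lower bound; I expect this ellipticity step to require the most care, since $m_{a_k}$ must be bounded below by a constant independent of $k$ and of $f$ on a set of full measure, which one obtains from $a_k\ge D/N\ge$ (a fixed positive function vanishing only at $0$) plus $a_k\ge g(P_kf)h/N>0$, arguing that these two lower bounds cannot vanish simultaneously since $D(0)=0$ but $h(0)>0$, $g>0$.

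Finally, with all three hypotheses of Theorem \ref{Thm:Approximation} verified for the prior $\nu_0$ (identified with $\mu_0$ via the map $U\mapsto(a,b)$, whose well-posedness is Proposition \ref{Prop:WellposedU}), the conclusion $d_{\rm H}(\nu^{y,s},\nu_k^{y,s})\le C\sqrt{\psi(k)}=Ck^{\alpha+1/2-l}$ follows directly. The hard part, as flagged, is the ellipticity comparison — ensuring $m_{a_k}$ stays bounded away from zero uniformly in $k$ almost surely — together with making precise that all the $f$-dependent constants entering the difference estimate have exponent at most $\beta$ in $\|f\|_{H^l}$, so that Proposition \ref{Prop:RandomSeries}(ii) applies; the rest is routine Sobolev-embedding bookkeeping and an invocation of the already-established abstract theorem.
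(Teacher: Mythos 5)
Your overall strategy---reduce to Theorem \ref{Thm:Approximation} by verifying its three hypotheses for $(a_k,b_k)=((U_k+D)/N,\,U_k-D)$ via the Sobolev embedding $H^l\hookrightarrow C^{0,\alpha}$ and the spectral decay of the projection error $\|f-P_kf\|$---is exactly the paper's. There are, however, two concrete problems in the execution.

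First, the exponents do not match. You set $\psi(k)=k^{2(\alpha+\frac{1}{2}-l)}$ so that $\sqrt{\psi(k)}$ equals the advertised rate, but the estimate you actually derive is $\|(a,b)-(a_k,b_k)\|_{0,\alpha}\leq C\exp\left(C\|f\|_{H^l}^{\beta}\right)k^{\alpha+\frac{1}{2}-l}$ (and strictly speaking only $k^{l^*-l}$ for $l^*>\alpha+\frac{1}{2}$, since the borderline embedding $H^{\alpha+1/2}\hookrightarrow C^{0,\alpha}$ fails; the paper works with $l>l^*>\alpha+\frac{1}{2}$ and sends $l^*\to\alpha+\frac{1}{2}$ at the end). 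So the third hypothesis of Theorem \ref{Thm:Approximation} is verified only with $\psi(k)=k^{\alpha+\frac{1}{2}-l}$, and that theorem then yields $d_{\rm H}\leq Ck^{(\alpha+\frac{1}{2}-l)/2}$, not the claimed $Ck^{\alpha+\frac{1}{2}-l}$. You cannot simply declare $\psi$ to be the square of what your estimate delivers; as written the proposal is internally inconsistent. (The paper's own proof derives the likelihood difference at rate $(k+1)^{l^*-l}$ and then asserts the unhalved exponent, so this tension is inherited from the source, but your write-up does not resolve it.)

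Second, the ellipticity hypothesis. You aim for a lower bound on $m_{a_k}$ that is uniform in $k$ \emph{and in $f$}; no such bound exists, because near $x=1$ one has $h(x)\to 0$ while near $x=0$ one has $D(x)\to 0$, so $m_{a_k}$ is controlled by $g(-\|P_kf\|_\infty)\sim(2+\|P_kf\|_\infty)^{-1}$ and genuinely degenerates as $\|f\|_\infty\to\infty$. Relatedly, the literal hypothesis $m_a\leq Cm_{a_k}$ of Theorem \ref{Thm:Approximation} with a constant uniform over the support of the prior cannot be checked either: $m_a\leq\gamma/N$ always, while $m_{a_k}$ can be arbitrarily small on sets of positive prior measure. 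What is actually needed---and what the paper does---is the $f$-dependent bound $m_{a_k}\geq C\left(1+\|f\|_{H^l}\right)^{-1}$ (using $\|P_kf\|_{H^l}\leq\|f\|_{H^l}$), which, substituted for $m_a\wedge m_{a_k}$ in the exponential, remains integrable under $\nu_0$ because $\beta>2q-1$; in other words one reruns the proof of Theorem \ref{Thm:Approximation} with this bound rather than invoking the theorem as a black box. Your observation that $D$ and $g(P_kf)h$ cannot vanish simultaneously is the right mechanism, but the target should be the $(1+\|f\|_{H^l})^{-1}$ bound, not a uniform constant.
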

	
	\subsection{Numerical example}
	
	Finally, we provide a numerical illustration of the previous methodology. Here, we consider $n=100$ observations of a single trajectory, that is $y\in(0,1)^n$ is simulated using a simple Euler-scheme with regular time-steps $0.1=s_1\leq \dots\leq s_n=T=10$. All data locations (without time ordering) can be seen as blue dots in the bottom plot of figure \ref{fig:Posterior}. The {true} coefficients in this simulation are $\gamma=\frac{1}{2}$, $N=100$ and $U(x)=1-x^2$ with $X_0=0.1$ at time $s_0=0$, the prior is truncated to $k=100$ components, we chose $\alpha=1$ which leads to $\beta=3.01$, $\theta=4$ and the likelihood is numerically computed using a second order Chang-Cooper finite difference scheme \cite{Chang1970,Mohammadi2015}. Samples from the prior distribution are shown in figure \ref{fig:Posterior} (top) by direct sampling of $(\eta_0,\dots,\eta_{99})$ multiple times. The posterior samples are obtained using a whitened version of the pCN algorithm, see \cite{Chen2018}. The conditional mean (CM) is computed out of these samples. The MAP estimator is obtained as the minimizer, among the posterior sample of the following generalized Onsagher-Mashlup functional:
		\begin{equation*}
		\Psi(\eta_0,\dots,\eta_{99})=-\log\mathcal{L}^{y,s}_{BD}\Big(g\big(\sum_{i=0}^{99}\gamma_i\eta_i\big)h\Big)+\sum_{i=0} ^{99}\vert\eta_i\vert^\beta.
	\end{equation*}
	Both estimators are shown in figure \ref{fig:Posterior}, bottom plot. Here, one can observe that the posterior distribution shifts and contract around the true value (blue curve in figure \ref{fig:Posterior}).
	
	\begin{figure}[!h]
		\centering
		\includegraphics[width=4in,height=4in]{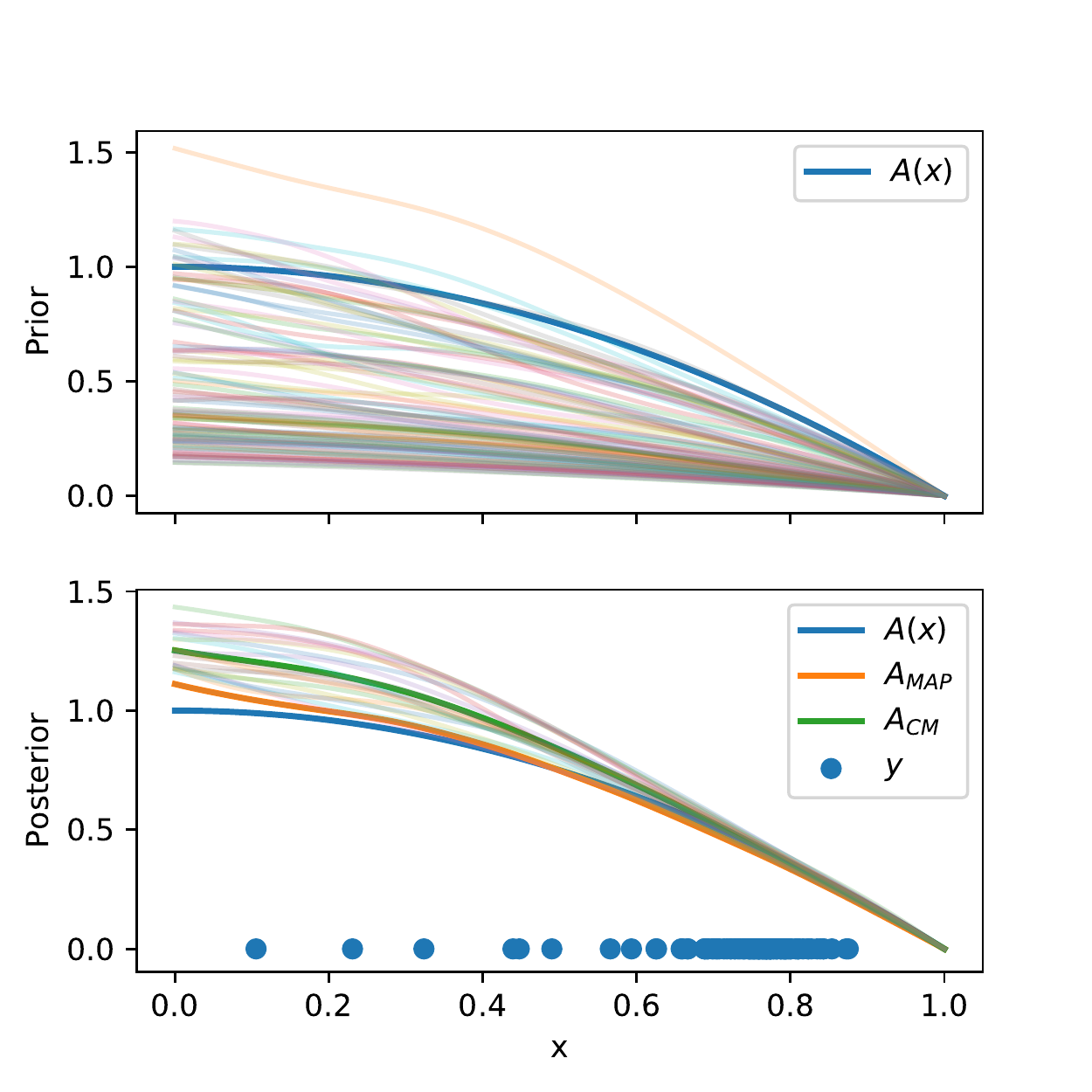}
		\label{fig:Posterior}
		\caption{Samples from the prior (top) and posterior (bottom) distributions on $U$ as well as MAP and CM estimators. Thick blue curves represent the true parameter $U(x)=1-x^2$. Blue dots represent data locations.}
	\end{figure}
	
	\section{Proofs}
	\label{sec:Proofs}
	
	In order to prove Theorem \ref{Thm:Main_thm} \& \ref{Thm:Approximation}, our strategy consists in establishing general results for Green functions associated to one-dimensional uniformly parabolic equations (Subsection \ref{subsec:Green}) and then apply them to the special case of Kolmogorov backward equation for SDEs (Subsection \ref{subsec:SDE}). This is motivated by observing that the backward Kolmogorov equation \eqref{Eq:KolmogorovBackward} is parabolic with the inversion of time $v=t-\tau$:
	\begin{gather*}
		\frac{\partial p}{\partial v}=\frac{a(\xi,t-v)}{2}\frac{\partial^2 p}{\partial \xi^2}+b(\xi,t-v)\frac{\partial p}{\partial \xi},\\
		\lim_{s\to 0}p(x,t;\xi,t-v)=\delta(\xi-x).
	\end{gather*}
	In the rest of this document, every reference to a constant $C$ may change from a line to the next.
	
	\subsection{Green functions of parabolic equations}
	\label{subsec:Green}
	
	In this section, we will present elementary results from the theory of non-degenerate parabolic partial differential equations in non-divergence form. We consider the following model equation:
	\begin{equation}
	\begin{split}
	\frac{\partial p}{\partial t}=a(x,t)\frac{\partial^2 p}{\partial x^2}+b(x,t)\frac{\partial p}{\partial x},
	\end{split}
	\label{Eq:Parabolic}
	\end{equation}
	with either Dirichlet or Neumann boundary conditions on the one-dimensional bounded domain $I=(0,1)$, and $(a,b)\in \Lambda_\alpha$ for $\alpha\in(0,1]$. In the rest of this paper and without loss of generality, we will always assume that $\lVert a\rVert_{0,\alpha}$, $\lVert b\rVert_{0,\alpha}$ are larger than one, $m_a\leq 1$ and we will use the notations $\lVert (a,b)\rVert_{0,\alpha}:=\lVert a\rVert_{0,\alpha}+\lVert b\rVert_{0,\alpha}$. Indeed, we can always replace $m_a$ in the following analysis by $1\wedge m_a$ or $\lVert a\rVert_{0,\alpha}$, $\lVert b\rVert_{0,\alpha}$ by $1\vee\lVert a\rVert_{0,\alpha}$, $1\vee\lVert a\rVert_{b,\alpha}$ respectively as we are after upper bounds. Now, for $0<\alpha\leq 1,\;(a,b)\in \Lambda_\alpha$, a well-known approach to construct fundamental solutions is the parametrix method, as presented in e.g. \cite{Friedman1992}. In the current context of Bayesian inference, it is also needed to study both the dependence (smoothness and bounds) of such fundamental solutions (or Green function) with respect to its coefficients. This is the objective of Theorem \ref{Thm:ParabolicGreen}, where the classical Gaussian upper bound (Nash-Aronson) is given with explicit dependence in $(a,b)$ together with a local Lipschitz continuity result.
	\begin{thm}
		Let $T>0$, $\alpha\in(0,1]$ and $(a,b)\in \Lambda_\alpha$. Then equation \eqref{Eq:Parabolic} with Dirichlet or Neumann boundary conditions has a unique Green function $G^{a,b}$ satisfying the following properties for $0\leq \tau<t\leq T$:
		\begin{itemize}
			\item (Continuity): $G^{a,b}$ is continuous in $(x,t;\xi,\tau;a,b)$,
			\item (Gaussian-type upper bound): for all $q>\frac{2}{\alpha}$ there exists $C>0$,
			\begin{equation*}
			\forall (x,\xi)\in[0,1]^2,\;\left\vert G^{a,b}(x,t;\xi,\tau)\right\vert\leq \frac{C}{\sqrt{t-\tau}}\exp\left(Cm_a^{1-q}\lVert (a,b)\rVert_{0,\alpha}^q\right),
			\end{equation*}
			\item (Local Lipschitz continuity in the coefficients): let $(a,b),(a',b')\in \Lambda_\alpha$, $q>\frac{2}{\alpha}$ then for all $(x,\xi)\in[0,1]^2$,
			\begin{equation*}
			\begin{split}
			&\left\vert G^{a,b}(x,t;\xi,\tau)-G^{a',b'}(x,t;\xi,\tau)\right\vert\\
			&\leq \frac{C}{\sqrt{t-\tau}}\exp\left(C\left(m_a\wedge m_{a'}\right)^{1-q}\left(\lVert (a,b)\rVert_{0,\alpha}\vee\lVert (a',b')\rVert_{0,\alpha}\right)^q\right)\\
			&\times\lVert (a,b)-(a',b')\rVert_{0,\alpha}.
			\end{split}
			\end{equation*}
		\end{itemize}
		\label{Thm:ParabolicGreen}
	\end{thm}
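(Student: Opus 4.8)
The plan is to follow the classical parametrix construction of Friedman, but with the parametrix function adapted to the boundary conditions via the method of images, and to track the explicit dependence of all constants on $m_a$ and $\lVert(a,b)\rVert_{0,\alpha}$ throughout. First I would fix the principal part: for a frozen spatial point $\xi_0$ and time $\tau_0$, the parametrix $Z^{a,b}(x,t;\xi,\tau)$ is built from the fundamental solution of the constant-coefficient heat equation $\partial_t p = a(\xi_0,\tau_0)\partial_x^2 p$, but instead of the whole-line Gaussian one takes its image-series reflection (for Neumann) or the alternating image series (for Dirichlet) on $(0,1)$, as in Section \ref{s:altParametrix}. The key elementary estimates here are the Gaussian bound $|Z^{a,b}(x,t;\xi,\tau)|\le C(t-\tau)^{-1/2}\exp(-c|x-\xi|^2/(a_0(t-\tau)))$ with $a_0$ between $m_a$ and $\lVert a\rVert_{0,\alpha}$, together with the analogous bounds for $\partial_x Z$ and $\partial_x^2 Z$ carrying extra negative powers of $(t-\tau)$; all constants $c$ degrade like a power of $m_a$ and $C$ grows polynomially in $\lVert a\rVert_{0,\alpha}$. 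The image series converges uniformly because the tails are controlled by the Gaussian factor, so these bounds hold on the bounded domain with only a bounded-time correction.

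Next I would set up the Levi iteration. Writing $G^{a,b}=Z^{a,b}+\int_\tau^t\!\!\int_0^1 Z^{a,b}(x,t;\eta,\sigma)\Phi^{a,b}(\eta,\sigma;\xi,\tau)\,d\eta\,d\sigma$, the kernel $\Phi^{a,b}$ solves the Volterra equation $\Phi = K + K\ast\Phi$ where $K^{a,b}=(L_{x}-L_{\xi_0})Z^{a,b}$ is the commutator of the operator with its frozen version. The H\"older continuity of $(a,b)$ gives $|K^{a,b}(x,t;\xi,\tau)|\le C\lVert(a,b)\rVert_{0,\alpha}\,m_a^{-\kappa}(t-\tau)^{-1+\alpha/2}\exp(-c m_a|x-\xi|^2/(t-\tau))$ for a suitable $\kappa$; iterating the convolution, the Beta-function bookkeeping produces a convergent series whose sum is bounded by $C(t-\tau)^{-1+\alpha/2}$ times an exponential in $m_a^{1-q}\lVert(a,b)\rVert_{0,\alpha}^{q}$ once one optimises the Beta-function constants against $q>2/\alpha$ (this is exactly where the condition $q>2/\alpha$ enters — it is forced by balancing the $\Gamma(\alpha/2)^k/\Gamma(k\alpha/2)$ decay against the growth of the norms through the iterates). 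Convolving once more with $Z^{a,b}$ restores the $(t-\tau)^{-1/2}$ singularity and yields the stated Gaussian-type upper bound. Continuity of $G^{a,b}$ in all variables, including jointly in $(a,b)$, follows from uniform convergence of the parametrix series together with continuity of $Z^{a,b}$ and $\Phi^{a,b}$ in the coefficients, which in turn reduces to continuity of the frozen-coefficient kernels in $a(\xi_0,\tau_0)$ — elementary since that dependence is through an explicit Gaussian.

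For the local Lipschitz estimate I would differentiate the construction in the coefficients. Both $Z$ and $K$ depend on $(a,b)$ through (i) the explicit frozen value $a(\xi_0,\tau_0)$ appearing in the Gaussian and (ii) multiplicatively through $a,b$ and their H\"older seminorms; in either case the difference $Z^{a,b}-Z^{a',b'}$ (resp. $K^{a,b}-K^{a',b'}$) is bounded by $\lVert(a,b)-(a',b')\rVert_{0,\alpha}$ times a kernel of the same type as before but with constants governed by $m_a\wedge m_{a'}$ and $\lVert(a,b)\rVert_{0,\alpha}\vee\lVert(a',b')\rVert_{0,\alpha}$. Propagating this difference through the Volterra series — using $\Phi-\Phi' = (K-K') + (K-K')\ast\Phi + K'\ast(\Phi-\Phi')$ and a discrete Gr\"onwall argument on the iterates — gives $|\Phi^{a,b}-\Phi^{a',b'}|$ bounded by $\lVert(a,b)-(a',b')\rVert_{0,\alpha}$ times $(t-\tau)^{-1+\alpha/2}$ times the claimed exponential, and convolving with $Z$ (and adding the $Z-Z'$ term) completes the estimate. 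The main obstacle, and the part requiring real care, is the bookkeeping in the Levi series: one must keep the $m_a$-dependence of the exponential decay constant $c$ separate from the polynomial growth in the norms, and then show that summing the iterated Beta-function bounds against a coefficient growing like $(\lVert(a,b)\rVert_{0,\alpha} m_a^{-\kappa})^k/\Gamma(k\alpha/2)$ converges to something of the form $\exp(C m_a^{1-q}\lVert(a,b)\rVert_{0,\alpha}^q)$ — an elementary but delicate optimisation over $q$, essentially an application of the inequality $\sum_k x^k/\Gamma(k\alpha/2)\le C\exp(Cx^{2/\alpha})$ with $x$ a power of $m_a^{-1}\lVert(a,b)\rVert_{0,\alpha}$, which is why the exponent $2/\alpha$ is sharp for this method and cannot be improved to accommodate Gaussian priors.
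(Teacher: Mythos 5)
Your proposal is correct and follows essentially the same route as the paper: the method-of-images parametrix adapted to the boundary conditions, the Levi/Volterra iteration for $\Phi^{a,b}$ with Beta-function bookkeeping of the iterated convolutions, the Mittag--Leffler-type bound $\sum_k x^k/\Gamma(k(1-\mu)+1)\leq C\exp\left(Cx^{1/(1-\mu)}\right)$ as the source of the exponent $q>\frac{2}{\alpha}$, and the Lipschitz estimate obtained by differencing the iterated kernels (your Volterra-plus-Gr\"onwall phrasing is just a repackaging of the paper's induction on $(LZ)_k^{a,b}-(LZ)_k^{a',b'}$). The only ingredient you leave implicit that the paper also handles by citation is the verification that the resulting $G^{a,b}$ is indeed the unique fundamental solution satisfying the boundary conditions, which follows from Friedman's classical argument.
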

	
	The rest of this section will be dedicated to the proof of this Theorem with particular emphasis on the new results which are the two last properties.
	
	\subsubsection{Parametrix functions}\label{s:altParametrix}
	
	In the classical construction from \cite{Friedman1992}, the parametrix function is taken to be the fundamental solution of the Heat equation with constant diffusion $2a(\xi,\tau)$ (the variables $(\xi,\tau)$ being fixed). Here, we use an alternative parametrix functions, already including boundary conditions (Dirichlet or Neumann). Indeed, the Heat equation with a constant diffusion coefficient can be solved in one dimension by the method of images as described in e.g. \cite{Renshaw2015}. Let $Z^a_D$ and $Z^a_N$ be respectively the parametrix (Heat fundamental solution) with Dirichlet and Neumann boundary conditions, then they can be represented as follows where $T>0$, $(x,\xi)\in[0,1]^2$, $0\leq \tau<t\leq T$ and $(a,0)\in \Lambda_\alpha$:
	\begin{equation}
	\begin{split}
	&Z^a_D(x,t;\xi,\tau)=\\
	&\frac{1}{\sqrt{4\pi a(\xi,\tau)(t-\tau)}}\sum_{n=-\infty}^{+\infty}
	\left(\exp\left(-\frac{(x-y_n)^2}{4a(\xi,\tau)(t-\tau)}\right)-\exp\left(-\frac{(x-z_n)^2}{4a(\xi,\tau)(t-\tau)}\right)\right),
	\end{split}
	\label{Eq:Parametrix_Dirichlet}
	\end{equation}
	\begin{equation}
	\begin{split}
	&Z^a_N(x,t;\xi,\tau)=\\
	&\frac{1}{\sqrt{4\pi a(\xi,\tau)(t-\tau)}}\sum_{n=-\infty}^{+\infty}
	\left(\exp\left(-\frac{(x-y_n)^2}{4a(\xi,\tau)(t-\tau)}\right)+\exp\left(-\frac{(x-z_n)^2}{4a(\xi,\tau)(t-\tau)}\right)\right),
	\end{split}
	\label{Eq:Parametrix_Neumann}
	\end{equation}
	where for all $n\in\mathbb{Z}$, $y_n=\xi+2n$ and $z_n=-\xi+2n$. The main reason to consider those functions in the parametrix construction instead of a Gaussian density lies in the fact that they will immediately provide Green functions satisfying boundary conditions. This avoids non-trivial modifications depending on the type of boundary conditions \cite{Friedman1992}. We now start by providing the necessary properties of parametrix functions to be compatible with the classical construction to come.
	
	\begin{prop}
		Let $Z^a$ be either the function defined in equation \eqref{Eq:Parametrix_Dirichlet} or \eqref{Eq:Parametrix_Neumann} with $\alpha\in(0,1]$, $(a,0)\in \Lambda_\alpha$ and $T>0$. Then for $k\in\lbrace 0,1,2\rbrace$, $(x,\xi)\in [0,1]^2$ and $0\leq \tau<t\leq T$ we have
		\begin{equation*}
		\left\vert\frac{\partial^kZ^a}{\partial x^k}(x,t;\xi,\tau)\right\vert\leq \frac{C}{a(\xi,\tau)^\mu(t-\tau)^\mu\vert x-\xi\vert^{k+1-2\mu}}\exp\left(-\frac{(1-\epsilon)(x-\xi)^2}{4a(\xi,\tau)(t-\tau)}\right),
		\end{equation*}
		with $0<\epsilon<1$ and $\mu\in[0,\frac{k+1}{2}]$. Furthermore, when $(a',0)\in \Lambda_\alpha$ one has
		\begin{equation*}
		\begin{split}
		&\left\vert\frac{\partial^kZ^a}{\partial x^k}(x,t;\xi,\tau)-\frac{\partial^kZ^{a'}}{\partial x^k}(x,t;\xi,\tau)\right\vert\\
		&\leq \frac{C}{\left(m_a\wedge m_{a'}\right)^{\max\left(1,\frac{2k+1}{2}\right)+\mu}(t-\tau)^\mu\vert x-\xi\vert^{k-2\mu}}\\
		&\times\exp\left(-\frac{(1-\epsilon)(x-\xi)^2}{4\left(\lVert a\rVert_{0,\alpha}\vee\lVert a'\rVert_{0,\alpha}\right)(t-\tau)}\right)\lVert a-a'\rVert_{0,\alpha},
		\end{split}
		\end{equation*}
		with $\mu$ same as above.
		\label{Prop:Parametrix}
	\end{prop}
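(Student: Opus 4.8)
The plan is to reduce everything to elementary estimates on the Gaussian kernel and its first two $x$-derivatives, and then to handle the infinite sum over images by a geometric-decay argument. Write $\sigma:=a(\xi,\tau)(t-\tau)$ and recall the elementary inequality: for any $m\ge 0$, any $\delta\in(0,1)$ and any $r\in\mathbb{R}$,
\begin{equation*}
 |r|^m\exp\!\Big(-\frac{r^2}{4\sigma}\Big)\le C_{m,\delta}\,\sigma^{m/2}\exp\!\Big(-\frac{(1-\delta)r^2}{4\sigma}\Big),
\end{equation*}
which follows by maximising $|r|^m e^{-\delta r^2/(4\sigma)}$ in $r$. For $k\in\{0,1,2\}$ the $k$-th $x$-derivative of the Gaussian $(4\pi\sigma)^{-1/2}\exp(-(x-y_n)^2/4\sigma)$ is a Hermite-type polynomial in $(x-y_n)/\sqrt{\sigma}$ times the Gaussian, of total degree $k$; applying the inequality above with $r=x-\xi$ for the principal term $n=0$ and choosing $\mu$ to absorb a factor $|x-\xi|^{2\mu}$ gives exactly the claimed bound $C\,\sigma^{-\mu}|x-\xi|^{2\mu-k-1}\exp(-(1-\epsilon)(x-\xi)^2/4\sigma)$ with the allowed range $\mu\in[0,(k+1)/2]$. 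First I would establish this single-term estimate carefully, since it is the template for everything.

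Next I would control the tail $n\neq 0$ (and the reflected points $z_n$). For $x,\xi\in[0,1]$ and $|n|\ge1$ one has $|x-y_n|=|x-\xi-2n|\ge 2|n|-1\ge |n|$, and moreover $|x-y_n|^2\ge \tfrac12(x-\xi)^2 + c\,n^2$ for a universal $c>0$ (split the line $\mathbb{R}$ according to whether $|x-\xi|\le|n|$ or not). Hence each tail term is bounded by $C\sigma^{-(k+1)/2}\exp(-cn^2/4\sigma)\exp(-(x-\xi)^2/8\sigma)$; summing over $n$ and using $\sum_n e^{-cn^2/4\sigma}\le C(1+\sqrt{\sigma/c})$ together with $\sigma\le \|a\|_{0,\alpha}T$ shows the tail is dominated by the principal term. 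This is where the method-of-images structure is genuinely used, and I expect this geometric bookkeeping to be the main technical obstacle — in particular keeping the dependence on $a(\xi,\tau)$ (equivalently on $m_a$ from below) explicit, rather than hidden in constants.

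For the Lipschitz estimate I would write the difference of the two parametrices termwise and split it into two contributions: the difference coming from the prefactor $(4\pi a(\xi,\tau)(t-\tau))^{-1/2}$ versus $(4\pi a'(\xi,\tau)(t-\tau))^{-1/2}$, and the difference coming from the exponents $-(x-y_n)^2/4a(\xi,\tau)(t-\tau)$ versus the primed one. For the prefactor, $|a^{-1/2}-a'^{-1/2}|\le C(m_a\wedge m_{a'})^{-3/2}|a-a'|$, and $|a(\xi,\tau)-a'(\xi,\tau)|\le \|a-a'\|_{0,\alpha}$. For the exponent, I would use the mean value theorem on $u\mapsto e^{-(x-y_n)^2/4u(t-\tau)}$ between $u=a(\xi,\tau)$ and $u=a'(\xi,\tau)$: the derivative produces a factor $(x-y_n)^2/(4u^2(t-\tau))$ times a Gaussian at the intermediate value, and since the intermediate value lies between $m_a\wedge m_{a'}$ and $\|a\|_{0,\alpha}\vee\|a'\|_{0,\alpha}$ one may bound the Gaussian from above by the one with the largest admissible diffusion $\|a\|_{0,\alpha}\vee\|a'\|_{0,\alpha}$ in the denominator of the exponent — this is precisely why that quantity appears inside the exponential in the stated bound. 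Absorbing the extra polynomial factor $(x-y_n)^2$ as before (which costs one more power of $(m_a\wedge m_{a'})^{-1}$ and shifts $|x-\xi|^{k+1-2\mu}$ to $|x-\xi|^{k-2\mu}$), and then summing the tail exactly as in the previous step, yields the exponent $\max(1,(2k+1)/2)+\mu$ on $(m_a\wedge m_{a'})^{-1}$ and completes the proof. The only care needed is to verify the power of $(m_a\wedge m_{a'})$ bookkeeping matches $\max(1,(2k+1)/2)+\mu$ in each of the cases $k=0,1,2$, which is a short check.
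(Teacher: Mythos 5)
Your proposal follows essentially the same route as the paper's proof: absorb the Hermite-type polynomial factors into the Gaussian at the cost of replacing the exponent by $(1-\epsilon)$ times itself, show that the image sum is dominated by the $n=0$ term, and for the Lipschitz estimate split the termwise difference into a prefactor contribution (handled by $|a^{-1/2}-a'^{-1/2}|\le C(m_a\wedge m_{a'})^{-3/2}|a-a'|$) and an exponent contribution (handled by the mean value theorem, which is exactly the content of the paper's Lemma \ref{Lem:Taylor}). The interpolation that trades powers of $(t-\tau)$ against powers of $|x-\xi|$ to produce the free parameter $\mu$ is also identical.

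The one step where you genuinely diverge is the summation over images. The paper bounds $\sum_{|n|\ge 2}\exp(-(x-y_n)^2/4\sigma)$ by an integral and then applies the Gaussian tail (Mills ratio) bound $1-\Phi(z)\le (z\sqrt{2\pi})^{-1}e^{-z^2/2}$, so the tail of the sum is controlled by $C\exp(-(x-\xi)^2/4\sigma)$ directly. You instead split $|x-y_n|^2\ge(1-\epsilon)(x-\xi)^2+c\,n^2$ and sum the resulting theta-like series, obtaining a factor of order $1+\sqrt{\sigma/c}$ with $\sigma=a(\xi,\tau)(t-\tau)$. After multiplying by the prefactor $(4\pi\sigma)^{-1/2}$ this gives a tail bound of order $\exp(-(1-\epsilon)(x-\xi)^2/4\sigma)$ with a \emph{universal} constant, which is the $\mu=0$ form of the claim; but to recover the $\mu=\tfrac{k+1}{2}$ form (the one actually invoked later, with the $(a(\xi,\tau)(t-\tau))^{-1/2}$ singularity) you must reinstate a factor $\sqrt{\sigma}\le\sqrt{\lVert a\rVert_{0,\alpha}T}$, so your constant $C$ acquires a polynomial dependence on $\lVert a\rVert_{0,\alpha}$ for $\mu>0$. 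You flag this yourself as the delicate point, and it is harmless for the downstream use in Proposition \ref{Prop:Kernel} and Theorem \ref{Thm:ParabolicGreen} (any polynomial factor in $\lVert (a,b)\rVert_{0,\alpha}$ is absorbed into $\exp(Cm_a^{1-q}\lVert (a,b)\rVert_{0,\alpha}^q)$), but since the entire purpose of this proposition is to make the $a$-dependence of the constants explicit, you should state this dependence rather than write ``dominated by the principal term.'' The remaining exponent bookkeeping for $(m_a\wedge m_{a'})^{-\max(1,\frac{2k+1}{2})-\mu}$ that you defer is indeed routine and is carried out in the paper only for $k=0$ in any case.
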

	
	\begin{proof}
		We start by observing that for all $(x,\xi)\in [0,1]^2$, $(t,\tau)\in[0,T]^2$, $T>0$, $0\leq \tau<t\leq T$, the function $y\to\exp\left(-\frac{(x-\xi-2y)^2}{4a(\xi,\tau)(t-\tau)}\right)$ is decreasing on $\left[\frac{x-\xi}{2},+\infty\right)$ and using a series/integral comparison, one obtains:
		\begin{equation*}
		\sum_{n\geq 2}
		\exp\left(-\frac{(x-y_n)^2}{4a(\xi,\tau)(t-\tau)}\right)\leq C\exp\left(-\frac{(x-\xi)^2}{4a(\xi,\tau)(t-\tau)}\right).
		\end{equation*}
		Indeed, taking $z=\frac{2y-x+\xi}{\sqrt{2a(\xi,\tau)(t-\tau)}}$, one has:
		\begin{equation*}
		\begin{split}
		\sum_{n\geq 2}\exp\left(-\frac{(x-y_n)^2}{4a(\xi,\tau)(t-\tau)}\right)&\leq\int_{1}^{+\infty}\exp\left(-\frac{(x-\xi-2y)^2}{4a(\xi,\tau)(t-\tau)}\right)dy,\\
		&\leq\int_{\frac{2-x+\xi}{\sqrt{2a(\xi,\tau)(t-\tau)}}}^{+\infty}\frac{2}{\sqrt{2a(\xi,\tau)(t-\tau)}}\exp\left(-\frac{z^2}{2}\right)dz,\\
		&\leq\frac{2\sqrt{2\pi}}{\sqrt{2a(\xi,\tau)(t-\tau)}}\left(1-\Phi\left(\frac{2-x+\xi}{\sqrt{2a(\xi,\tau)(t-\tau)}}\right)\right),\\
		&\leq \frac{2}{2-x+\xi}\exp\left(-\frac{\left(2-x+\xi\right)^2}{4a(\xi,\tau)(t-\tau)}\right),\\
		&\leq C\exp\left(-\frac{\left(x-\xi\right)^2}{4a(\xi,\tau)(t-\tau)}\right),
		\end{split}
		\end{equation*}
		where we used the following bound for $1-\Phi(x)$:
		\begin{equation*}
		\begin{split}
		\forall x>0,\;1-\Phi(x)
		\leq\int_{x}^{+\infty}\frac{y}{x\sqrt{2\pi}}\exp\left(-\frac{y^2}{2}\right)dy
		=\frac{1}{x\sqrt{2\pi}}\exp\left(-\frac{x^2}{2}\right).
		\end{split}
		\end{equation*}
		A similar calculation leads to:
		\begin{equation*}
		\sum_{n\leq -2}\exp\left(-\frac{(x-y_n)^2}{4a(\xi,\tau)(t-\tau)}\right)\leq C\exp\left(-\frac{\left(x-\xi\right)^2}{4a(\xi,\tau)(t-\tau)}\right),
		\end{equation*}
		and it finally comes, considering the remaining terms $n\in\lbrace -1,0,1\rbrace$ that:
		\begin{equation*}
		\sum_{n=-\infty}^{+\infty}\exp\left(-\frac{(x-y_n)^2}{4a(\xi,\tau)(t-\tau)}\right)\leq C\exp\left(-\frac{\left(x-\xi\right)^2}{4a(\xi,\tau)(t-\tau)}\right).
		\end{equation*}
		A similar method gives:
		\begin{equation*}
		\sum_{n=-\infty}^{+\infty}\exp\left(-\frac{(x-z_n)^2}{4a(\xi,\tau)(t-\tau)}\right)\leq C\exp\left(-\frac{\left(x-\xi\right)^2}{4a(\xi,\tau)(t-\tau)}\right),
		\end{equation*}
		which leads to the announced upper bound for both $Z^a_N$ and $Z^a_D$ by the triangle inequality. Consider now the first derivative, bounded as follows:
		\begin{equation*}
		\begin{split}
		&\left\vert\frac{\partial Z^a}{\partial x}(x,t;\xi,\tau)\right\vert\\
		&\leq\frac{C}{a(\xi,\tau)^\frac{3}{2}(t-\tau)^\frac{3}{2}}\\
		&\times\sum_{n=-\infty}^{+\infty}
		\left((x-y_n)\exp\left(-\frac{(x-y_n)^2}{4a(\xi,\tau)(t-\tau)}\right)+(x-z_n)\exp\left(-\frac{(x-z_n)^2}{4a(\xi,\tau)(t-\tau)}\right)\right).
		\end{split}
		\end{equation*}
		As one has for all $n\in\mathbb{Z}$:
		\begin{equation*}
		\frac{(x-y_n)}{\sqrt{a(\xi,\tau)(t-\tau)}}\exp\left(-\frac{(x-y_n)^2}{4a(\xi,\tau)(t-\tau)}\right)\leq C\exp\left(-\frac{(1-\epsilon)(x-y_n)^2}{4a(\xi,\tau)(t-\tau)}\right),
		\end{equation*}
		with $C$ independent of $n$ and $\epsilon\in(0,1)$, it follows that:
		\begin{equation*}
		\begin{split}
		&\left\vert\frac{\partial Z^a}{\partial x}(x,t;\xi,\tau)\right\vert\\
		&\leq \frac{C}{a(\xi,\tau)(t-\tau)}\sum_{n=-\infty}^{+\infty}
		\frac{\vert x-y_n\vert}{\sqrt{a(\xi,\tau)(t-\tau)}}\exp\left(-\frac{(x-y_n)^2}{4a(\xi,\tau)(t-\tau)}\right)\\
		&+\frac{C}{a(\xi,\tau)(t-\tau)}\sum_{n=-\infty}^{+\infty}\frac{\vert x-z_n\vert}{\sqrt{a(\xi,\tau)(t-\tau)}}\exp\left(-\frac{(x-z_n)^2}{4a(\xi,\tau)(t-\tau)}\right),\\
		&\leq \frac{C}{a(\xi,\tau)(t-\tau)}\sum_{n=-\infty}^{+\infty}
		\exp\left(-\frac{(1-\epsilon)(x-y_n)^2}{4a(\xi,\tau)(t-\tau)}\right)
		+\exp\left(-\frac{(1-\epsilon)(x-z_n)^2}{4a(\xi,\tau)(t-\tau)}\right),\\
		&\leq \frac{C}{a(\xi,\tau)(t-\tau)}\exp\left(-\frac{(1-\epsilon)\left(x-\xi\right)^2}{4a(\xi,\tau)(t-\tau)}\right),\\
		&\leq \frac{C}{a(\xi,\tau)^\mu(t-\tau)^\mu\vert x-\xi\vert^{2-2\mu}}
		\exp\left(-\frac{(1-\epsilon)\left(x-\xi\right)^2}{4a(\xi,\tau)(t-\tau)}\right).
		\end{split}
		\end{equation*}
		A similar computation provides the bound for $\frac{\partial^2 Z^a}{\partial x^2}$. Regarding the local Lipschitz continuity in the coefficient $a$, we first consider the following relation, given for every $n\in\mathbb{Z}$ (by Lemma \ref{Lem:Taylor} in the appendix):
		\begin{equation*}
		\begin{split}
		&\left\vert\exp\left(-\frac{(x-y_n)^2}{4a(\xi,\tau)(t-\tau)}\right)-
		\exp\left(-\frac{(x-y_n)^2}{4a'(\xi,\tau)(t-\tau)}\right)\right\vert\\
		&\leq
		C\frac{\vert a(\xi,\tau)-a'(\xi,\tau)\vert}{\left(a(\xi,\tau)\wedge a'(\xi,\tau)\right)}\exp\left(-\frac{(1-\epsilon)(x-y_n)^2}{4\left(a(\xi,\tau)\vee a'(\xi,\tau)\right)(t-\tau)}\right),
		\end{split}
		\end{equation*}
		which immediately gives:
		\begin{equation*}
		\begin{split}
		&\sum_{n=-\infty}^{+\infty}\left\vert
		\exp\left(-\frac{(x-y_n)^2}{4a(\xi,\tau)(t-\tau)}\right)-
		\exp\left(-\frac{(x-y_n)^2}{4a'(\xi,\tau)(t-\tau)}\right)\right\vert\\
		&\leq C\frac{\vert a(\xi,\tau)-a'(\xi,\tau)\vert}{\left(a(\xi,\tau)\wedge a'(\xi,\tau)\right)}
		\sum_{n=-\infty}^{+\infty}\exp\left(-\frac{(1-\epsilon)(x-y_n)^2}{4\left(a(\xi,\tau)\vee a'(\xi,\tau)\right)(t-\tau)}\right)\\
		&\leq C\frac{\vert a(\xi,\tau)-a'(\xi,\tau)\vert}{\left(a(\xi,\tau)\wedge a'(\xi,\tau)\right)}\exp\left(-\frac{(1-\epsilon)\left(x-\xi\right)^2}{4\left(a(\xi,\tau)\vee a'(\xi,\tau)\right)(t-\tau)}\right).
		\end{split}
		\end{equation*}
		The same applies to the series involving $z_n$. Now consider the following calculation:
		\begin{equation*}
		\begin{split}
		&\left\vert Z^a(x,t;\xi,\tau)-Z^{a'}(x,t;\xi,\tau)\right\vert\\
		&\leq \frac{C}{(t-\tau)^\frac{1}{2}}
		\left(\left\vert\frac{1}{a(\xi,\tau)^\frac{1}{2}}-\frac{1}{a'(\xi,\tau)^\frac{1}{2}}\right\vert\right.\\
		&\times\sum_{n=-\infty}^{+\infty}
		\left(\exp\left(-\frac{(x-y_n)^2}{4a(\xi,\tau)(t-\tau)}\right)+\exp\left(-\frac{(x-z_n)^2}{4a(\xi,\tau)(t-\tau)}\right)\right)\\
		&+\frac{1}{a'(\xi,\tau)^\frac{1}{2}}
		\sum_{n=-\infty}^{+\infty}\left\vert
		\exp\left(-\frac{(x-y_n)^2}{4a(\xi,\tau)(t-\tau)}\right)-
		\exp\left(-\frac{(x-y_n)^2}{4a'(\xi,\tau)(t-\tau)}\right)\right\vert\\
		&+\left.\frac{1}{a'(\xi,\tau)^\frac{1}{2}}
		\sum_{n=-\infty}^{+\infty}\left\vert
		\exp\left(-\frac{(x-z_n)^2}{4a(\xi,\tau)(t-\tau)}\right)-
		\exp\left(-\frac{(x-z_n)^2}{4a'(\xi,\tau)(t-\tau)}\right)\right\vert\right),\\
		&\leq \frac{C}{(t-\tau)^\frac{1}{2}}
		\left(
		\frac{\left\vert a(\xi,\tau)-a'(\xi,\tau)\right\vert}{a(\xi,\tau)^\frac{1}{2}a'(\xi,\tau)^\frac{1}{2}(a(\xi,\tau)^\frac{1}{2}+a'(\xi,\tau)^\frac{1}{2})}
		\exp\left(-\frac{(x-\xi)^2}{4\left(a(\xi,\tau)\vee a'(\xi,\tau)\right)(t-\tau)}\right)\right.\\
		&+\left.\frac{1}{a'(\xi,\tau)^\frac{1}{2}}
		\frac{\vert a(\xi,\tau)-a'(\xi,\tau)\vert}{\left(a(\xi,\tau)\wedge a'(\xi,\tau)\right)}
		\exp\left(-\frac{(x-\xi)^2}{4\left(a(\xi,\tau)\vee a'(\xi,\tau)\right)(t-\tau)}\right)\right),\\
		&\leq C\frac{\vert a(\xi,\tau)-a'(\xi,\tau)\vert}{\left(a(\xi,\tau)\wedge a'(\xi,\tau)\right)^{1+\mu}(t-\tau)^\mu\vert x-\xi\vert^{1-2\mu}}
		\exp\left(-\frac{(1-\epsilon)(x-\xi)^2}{4\left(a(\xi,\tau)\vee a'(\xi,\tau)\right)(t-\tau)}\right),
		\end{split}
		\end{equation*}
		where $\mu\in[0,\frac{1}{2}]$. The same method applies to $\frac{\partial Z^a}{\partial x}$ and $\frac{\partial^2 Z^a}{\partial x^2}$.
	\end{proof}
	
	\subsubsection{Regularity properties of Green functions}
	
	In the parametrix method, the fundamental solution (or Green function), is obtained via the following relation (see \cite{Friedman1992}, Chapter 1):
	\begin{equation}
	G^{a,b}(x,t;\xi,\tau):=Z^a(x,t;\xi,\tau)+\int_{\tau}^{t}\int_0^1Z^a(x,t;y,\sigma)\Phi^{a,b}(y,\sigma;\xi,\tau)dyd\sigma,
	\label{Eq:Green_construction}
	\end{equation}
	where $\Phi^{a,b}$ is a perturbation kernel defined in the next proposition. We follow the same procedure as in \cite{Friedman1992}, except that we include the necessary modifications to keep track of coefficients related constants, prove local Lipschitz continuity and use the alternative parametrix from equations \eqref{Eq:Parametrix_Dirichlet} and \eqref{Eq:Parametrix_Neumann}.
	
	\begin{prop}
		Let $(a,b)\in \Lambda_\alpha$ and $Z^a$ be either the function defined in equations \eqref{Eq:Parametrix_Dirichlet} or \eqref{Eq:Parametrix_Neumann}. Consider the following kernel for any $(x,\xi)\in[0,1]^2$ and $0\leq\tau<t\leq T$:
		\begin{equation*}
		(LZ)_1^{a,b}(x,t;\xi,\tau):=\left(a(x,t)-a(\xi,\tau)\right)\frac{\partial^2Z^a}{\partial x^2}+b(x,t)\frac{\partial Z^a}{\partial x},
		\end{equation*}
		and all its iterates for $k\geq 1$:
		\begin{equation}
		(LZ)_{k+1}^{a,b}(x,t;\xi,\tau)=\int_{\tau}^{t}\int_0^1(LZ)_1^{a,b}(x,t;y,\sigma)\left(LZ\right)^{a,b}_k(y,\sigma;\xi,\tau)dyd\sigma.
		\label{Eq:LZk}
		\end{equation}
		Then,
		\begin{equation}
		\Phi^{a,b}(x,t;\xi,\tau):=\sum_{k\geq 1}(LZ)_k^{a,b}(x,t;\xi,\tau),
		\label{Eq:Phi}
		\end{equation}
		is well-defined, 
and	 for $1-\frac{\alpha}{2}<\mu<1$ and $0<\epsilon<1$, is bounded with
			\begin{equation*}
			\begin{split}
			&\left\vert\Phi^{a,b}(x,t;\xi,\tau)\right\vert\\
			&\leq 
			\frac{C}{(t-\tau)^{\mu}\vert x-\xi\vert^{3-2\mu-\alpha}}
			\exp\left(
			-\frac{(1-\epsilon)(x-\xi)^2}{4\lVert a\rVert_{0,\alpha}(t-\tau)}
			+C\left(\frac{\lVert (a,b)\rVert_{0,\alpha}}{m_a^\mu}\right)^{\frac{1}{1-\mu}}
			\right),
			\end{split}
			\end{equation*}
			and  locally Lipschitz continuous in ($a,b$) satisfying
			\begin{equation*}
			\begin{split}
			&\left\vert \Phi^{a,b}(x,t;\xi,\tau)-\Phi^{a',b'}(x,t;\xi,\tau)\right\vert\\
			&\leq C\frac{\lVert (a,b)-(a',b')\rVert_{0,\alpha}}{\left(m_a\wedge m_{a'}\right)^{\frac{5}{2}}(t-\tau)^{\mu}\vert x-\xi\vert^{3-2\mu-\alpha}}\\
			&\times\exp\left(-\frac{(1-\epsilon)(x-\xi)^2}{4\left(\lVert a\rVert_{0,\alpha}\vee\lVert a'\rVert_{0,\alpha}\right)(t-\tau)}+C\left(\frac{\left(\lVert (a,b)\rVert_{0,\alpha}\vee\lVert (a',b')\rVert_{0,\alpha}\right)}{\left(m_a\wedge m_{a'}\right)^\mu}\right)^{\frac{1}{1-\mu}}\right).
			\end{split}
			\end{equation*}
		\label{Prop:Kernel}
	\end{prop}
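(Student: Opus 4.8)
The plan is to follow the classical parametrix iteration bookkeeping as in \cite{Friedman1992}, Chapter~1, but carrying the explicit dependence on $m_a$ and $\lVert(a,b)\rVert_{0,\alpha}$ through every step, and then to differentiate the series term-by-term in $(a,b)$ for the Lipschitz estimate. First I would obtain a bound on the first kernel $(LZ)_1^{a,b}$. Using the Hölder continuity of $a$ we have $|a(x,t)-a(\xi,\tau)|\le \lVert a\rVert_{0,\alpha}(|x-\xi|^\alpha+|t-\tau|^{\alpha/2})$ (the time increment being controlled by the Gaussian exponential as usual), and combined with the derivative bounds of Proposition \ref{Prop:Parametrix} for $k=2$ (choosing the free parameter $\mu$ in that proposition appropriately, e.g. close to $1$) and $k=1$, this gives
\begin{equation*}
\left|(LZ)_1^{a,b}(x,t;\xi,\tau)\right|\le \frac{C\,\lVert(a,b)\rVert_{0,\alpha}}{m_a^{\,\mu}(t-\tau)^{\mu}|x-\xi|^{3-2\mu-\alpha}}\exp\left(-\frac{(1-\epsilon)(x-\xi)^2}{4\lVert a\rVert_{0,\alpha}(t-\tau)}\right),
\end{equation*}
valid for $1-\tfrac{\alpha}{2}<\mu<1$ so that the spatial singularity exponent $3-2\mu-\alpha<1$ is integrable in one dimension and the time exponent $\mu<1$ is integrable in the $\sigma$-variable.

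Next I would set up the induction on $k$ for $(LZ)_k^{a,b}$. The key recursive computation is the convolution integral \eqref{Eq:LZk}: one substitutes the bound for $(LZ)_1^{a,b}$ and the inductive bound for $(LZ)_k^{a,b}$, then evaluates $\int_\tau^t\int_0^1$ of the product. The spatial integral is a Beta-type integral in $y$ that produces a constant times $|x-\xi|^{(1-2\mu-\alpha+2\mu)}=|x-\xi|^{-\text{something}}$ controlled by the Hölder exponent, the Gaussian factors combine (convolution of Gaussians, using $\lVert a\rVert_{0,\alpha}$ as the common denominator), and the time integral $\int_\tau^t (t-\sigma)^{-\mu}(\sigma-\tau)^{k(1-\mu)+\mu-1}d\sigma$ yields a Beta function $B(1-\mu,k(1-\mu))$ times $(t-\tau)^{(k+1)(1-\mu)+\mu-1}$. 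Iterating, the $k$-th term carries a factor $\big(C\lVert(a,b)\rVert_{0,\alpha}m_a^{-\mu}\big)^k\,\Gamma(1-\mu)^k/\Gamma\!\big(k(1-\mu)\big)$ times $(t-\tau)^{(k-1)(1-\mu)}$ against the $k=1$ profile. Summing over $k$: the $\Gamma(k(1-\mu))^{-1}$ in the denominator makes $\sum_k z^k/\Gamma(k(1-\mu))$ an entire function of $z$ of order $(1-\mu)^{-1}$, so $\sum_k$ is bounded by $C\exp\!\big(C(\lVert(a,b)\rVert_{0,\alpha}m_a^{-\mu})^{1/(1-\mu)}\big)$, which is exactly the claimed exponential. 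This gives the first displayed bound, the series being absolutely and uniformly convergent on compact time intervals, hence $\Phi^{a,b}$ well-defined.

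For the local Lipschitz estimate I would write $(LZ)_1^{a,b}-(LZ)_1^{a',b'}$ as a sum of telescoping differences: differences of the coefficients $a-a'$, $b-b'$ (each bounded by $\lVert(a,b)-(a',b')\rVert_{0,\alpha}$ times the same profile) plus differences of the parametrix derivatives $\partial_x^kZ^a-\partial_x^kZ^{a'}$, controlled by the second half of Proposition \ref{Prop:Parametrix} (which is where a factor $(m_a\wedge m_{a'})^{-\max(1,(2k+1)/2)}$ appears — this is the source of the power $5/2$ in the statement, coming from $k=2$). One then differentiates the recursion: $(LZ)_{k+1}^{a,b}-(LZ)_{k+1}^{a',b'}$ expands by the product rule into two convolution integrals, one with the factor $(LZ)_1^{a,b}-(LZ)_1^{a',b'}$ against $(LZ)_k^{a',b'}$, the other with $(LZ)_1^{a,b}$ against $(LZ)_k^{a,b}-(LZ)_k^{a',b'}$. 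An induction, using the already-established bounds for the undifferentiated terms with $\lVert(a,b)\rVert_{0,\alpha}$ replaced by the maximum and $m_a$ by the minimum, shows the difference of the $k$-th terms carries an extra combinatorial factor $k$ (from the two-term product rule chain) times the same $\Gamma$-ratios; since $\sum_k k\,z^k/\Gamma(k(1-\mu))$ has the same growth order as $\sum_k z^k/\Gamma(k(1-\mu))$, summation again produces the exponential, and the overall prefactor $\lVert(a,b)-(a',b')\rVert_{0,\alpha}(m_a\wedge m_{a'})^{-5/2}$ survives. The main obstacle is precisely this last bookkeeping: keeping the $m_a$-powers from Proposition \ref{Prop:Parametrix} from accumulating uncontrollably through the iteration while still obtaining a fixed power ($5/2$) in the final bound, which requires that the extra negative powers of $m_a$ introduced at each iteration be absorbed into the $(\lVert(a,b)\rVert_{0,\alpha}m_a^{-\mu})^{1/(1-\mu)}$ exponential rather than left as a polynomial prefactor — done by choosing $\mu$ and distributing the ellipticity constants carefully at each convolution step.
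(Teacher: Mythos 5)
Your proposal is correct and follows essentially the same route as the paper: bounding $(LZ)_1^{a,b}$ via Proposition \ref{Prop:Parametrix} with the Hölder continuity of $a$, iterating the convolution with Beta-function/Gamma-function bookkeeping until the space and time singularities are absorbed, summing via the Mittag-Leffler-type estimate to produce the exponential in $(\lVert(a,b)\rVert_{0,\alpha}/m_a^{\mu})^{1/(1-\mu)}$, and telescoping the recursion for the Lipschitz bound with the $(m_a\wedge m_{a'})^{-5/2}$ prefactor coming from the $k=2$ case of Proposition \ref{Prop:Parametrix}. The only cosmetic difference is that the paper first iterates to a finite $k_0$ at which the singular exponents become negative before invoking the classical Gamma-ratio estimate, whereas you track the Gamma factors from the start; both are standard and equivalent.
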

	
	\begin{proof}
		From the definition of $(LZ)_1^{a,b}$, the bounds given in Proposition \ref{Prop:Parametrix} and because the domain is bounded, it is clear by the triangle inequality (since $m_a\leq 1$, $1\leq \min\left(\lVert a\rVert_{0,\alpha},\lVert b\rVert_{0,\alpha}\right)$) that
		\begin{equation}
		\left\vert (LZ)_1^{a,b}(x,t;\xi,\tau)\right\vert
		\leq C\frac{\lVert (a,b)\rVert_{0,\alpha}}{m_a^{\mu}(t-\tau)^{\mu}\vert x-\xi\vert^{3-2\mu-\alpha}}
		\exp\left(-\frac{(1-\epsilon)(x-\xi)^2}{4a(\xi,\tau)(t-\tau)}\right),
		\label{Eq:Bound_LZ}
		\end{equation}
		where $1-\frac{\alpha}{2}<\mu<1$. Now, using equation \eqref{Eq:Bound_LZ} and \eqref{Eq:LZk}, the kernel $(LZ)_2^{a,b}$ is well-defined when $1-\frac{\alpha}{2}<\mu<1$ and it follows that (using Lemma \ref{Lem:Bound} in the appendix):
		\begin{equation*}
		\begin{split}
		&\left\vert (LZ)_2^{a,b}(x,t;\xi,\tau)\right\vert\\
		&\leq C\frac{\lVert (a,b)\rVert_{0,\alpha}^2}{m_a^{2\mu}(t-\tau)^{\mu+(\mu-1)}\vert x-\xi\vert^{1+2(2-2\mu-\alpha)}}
		\exp\left(-\frac{(1-\epsilon)(x-\xi)^2}{4\lVert a\rVert_{0,\alpha}(t-\tau)}\right).
		\end{split}
		\end{equation*}
		By induction and since $\max(\mu-1,2-2\mu-\alpha)<0$, there exists $k_0\in\mathbb{N}$ such that $\mu+k_0(\mu-1)$ and $1+(k_0+1)(2-2\mu-\alpha)$ are negative, thus can be absorbed in the constant $C$ and it comes:
		\begin{equation*}
		\left\vert (LZ)_{k_0}^{a,b}(x,t;\xi,\tau)\right\vert
		\leq C\left(\frac{\lVert (a,b)\rVert_{0,\alpha}}{m_a^{\mu}}\right)^{k_0}
		\exp\left(-\frac{(1-\epsilon)(x-\xi)^2}{4\lVert a\rVert_{0,\alpha}(t-\tau)}\right).
		\end{equation*}
		From this point onward, one gets $\forall k\geq 0$ (see Friedman \cite{Friedman1992}, page 15):
		\begin{equation*}
		\begin{split}
		&\left\vert (LZ)_{k_0+k}^{a,b}(x,t;\xi,\tau)\right\vert\\
		&\leq C\left(\frac{\lVert (a,b)\rVert_{0,\alpha}}{m_a^{\mu}}\right)^{k_0+m}
		\frac{\left[C(t-\tau)^{1-\mu}\right]^k}{\Gamma\left(k(1-\mu)+1\right)}\exp\left(-\frac{(1-\epsilon)(x-\xi)^2}{4\lVert a\rVert_{0,\alpha}(t-\tau)}\right).
		\end{split}
		\end{equation*}
		Finally, one obtains that $\Phi^{a,b}$ from equation \eqref{Eq:Phi} is well-defined and satisfies the following upper bound (using Lemma \ref{Lem:Bound_MittagLeffler} in the appendix) with $1-\frac{\alpha}{2}<\mu< 1$:
		\begin{equation*}
		\begin{split}
		&\left\vert\Phi^{a,b}(x,t;\xi,\tau)\right\vert\\
		&\leq\frac{C}{(t-\tau)^{\mu}\vert x-\xi\vert^{3-2\mu-\alpha}}\exp\left(-\frac{(1-\epsilon)(x-\xi)^2}{4\lVert a\rVert_{0,\alpha}(t-\tau)}+C\left(\frac{\lVert (a,b)\rVert_{0,\alpha}}{m_a^\mu}\right)^{\frac{1}{1-\mu}}\right).
		\end{split}
		\end{equation*}
		Now, to show the local Lipschitz continuity of $\Phi^{a,b}$, we also proceed by induction. Let $(a,b),(a',b')\in \Lambda_\alpha$, then one has:
		\begin{equation*}
		\begin{split}
		&(LZ)_1^{a,b}(x,t;\xi,\tau)-(LZ)_1^{a',b'}(x,t;\xi,\tau)\\
		&=\left(a(x,t)-a(\xi,\tau)\right)\left(\frac{\partial^2Z^a}{\partial x^2}(x,t;\xi,\tau)-\frac{\partial^2Z^{a'}}{\partial x^2}(x,t;\xi,\tau)\right)\\
		&+\left(a(x,t)-a(\xi,\tau)+a'(\xi,\tau)-a'(x,t)\right)\frac{\partial^2Z^{a'}}{\partial x^2}(x,t;\xi,\tau)\\
		&+b(x,t)\left(\frac{\partial Z^{a}}{\partial x}(x,t;\xi,\tau)-\frac{\partial Z^{a'}}{\partial x}(x,t;\xi,\tau)\right)\\
		&+\left(b(x,t)-b'(x,t)\right)\frac{\partial Z^{a'}}{\partial x}(x,t;\xi,\tau).
		\end{split}
		\end{equation*}
		Now, using Proposition \ref{Prop:Parametrix} one can estimate each term in the right hand side individually:
		\begin{enumerate}
			\item using Proposition \ref{Prop:Parametrix} with $k=2$:
			\begin{equation*}
			\begin{split}
			&\left(a(x,t)-a(\xi,\tau)\right)\left(\frac{\partial^2Z^a}{\partial x^2}(x,t;\xi,\tau)-\frac{\partial^2Z^{a'}}{\partial x^2}(x,t;\xi,\tau)\right)\\
			&\leq \frac{C\lVert a\rVert_\infty}{\left(m_a\wedge m_{a'}\right)^{\frac{5}{2}+\mu}(t-\tau)^\mu\vert x-\xi\vert^{2-2\mu}}\\
			&\times\exp\left(-\frac{(1-\epsilon)(x-\xi)^2}{4\left(\lVert a\rVert_{0,\alpha}\vee\lVert a'\rVert_{0,\alpha}\right)(t-\tau)}\right)\lVert a-a'\rVert_{0,\alpha},
			\end{split}
			\end{equation*}
			\item as we have $a-a'\in\mathcal{C}^{0,\alpha}([0,1]\times[0,T],\mathbb{R})$ and using Proposition \ref{Prop:Parametrix} with $k=2$:
			\begin{equation*}
				\begin{split}
				&\left(a(x,t)-a(\xi,\tau)+a'(\xi,\tau)-a'(x,t)\right)\frac{\partial^2Z^{a'}}{\partial x^2}(x,t;\xi,\tau)\\
				&\leq \frac{C\lVert a-a'\rVert_{0,\alpha}}{a'(\xi,\tau)^\mu(t-\tau)^\mu\vert x-\xi\vert^{3-2\mu-\alpha}}\exp\left(-\frac{(1-\epsilon)(x-\xi)^2}{4a'(\xi,\tau)(t-\tau)}\right)
				\end{split}
			\end{equation*}
			\item here Proposition \ref{Prop:Parametrix} with $k=1$ gives:
			\begin{equation*}
				\begin{split}
				&b(x,t)\left(\frac{\partial Z^{a}}{\partial x}(x,t;\xi,\tau)-\frac{\partial Z^{a'}}{\partial x}(x,t;\xi,\tau)\right)\\
				&\leq \frac{C\lVert b\rVert_\infty}{\left(m_a\wedge m_{a'}\right)^{\frac{3}{2}+\mu}(t-\tau)^\mu\vert x-\xi\vert^{1-2\mu}}\\
				&\times\exp\left(-\frac{(1-\epsilon)(x-\xi)^2}{4\left(\lVert a\rVert_{0,\alpha}\vee\lVert a'\rVert_{0,\alpha}\right)(t-\tau)}\right)\lVert a-a'\rVert_{0,\alpha},
				\end{split}
			\end{equation*}
			\item and finally:
			\begin{equation*}
			\begin{split}
			&\left(b(x,t)-b'(x,t)\right)\frac{\partial Z^{a'}}{\partial x}(x,t;\xi,\tau)\\
			&\leq\frac{C\lVert b-b'\rVert_{0,\alpha}}{a'(\xi,\tau)^\mu(t-\tau)^\mu\vert x-\xi\vert^{2-2\mu}}\exp\left(-\frac{(1-\epsilon)(x-\xi)^2}{4a(\xi,\tau)(t-\tau)}\right), 
			\end{split}
			\end{equation*}
		\end{enumerate}
		and gets $1-\frac{\alpha}{2}<\mu<1$:
		\begin{equation*}
		\begin{split}
		&\left\vert(LZ)_1^{a,b}(x,t;\xi,\tau)-(LZ)_1^{a',b'}(x,t;\xi,\tau)\right\vert\\
		&\leq C\frac{\left(\lVert (a,b)\rVert_{0,\alpha}\vee\lVert (a',b')\rVert_{0,\alpha}\right)}{\left(m_a\wedge m_{a'}\right)^{\frac{5}{2}+\mu}(t-\tau)^\mu\vert x-\xi\vert^{3-2\mu-\alpha}}\\
		&\times\exp\left(-\frac{(1-\epsilon)(x-\xi)^2}{4\left(\lVert a\rVert_{0,\alpha}\vee\lVert a'\rVert_{0,\alpha}\right)(t-\tau)}\right)\lVert (a,b)-(a',b')\rVert_{0,\alpha}.
		\end{split}
		\end{equation*}
		For the first iterated kernel, one has:
		\begin{equation*}
		\begin{split}
		&\left\vert LZ_2^{a,b}(x,t;\xi,\tau)-LZ_2^{a',b'}(x,t;\xi,\tau)\right\vert\\
		&\leq \int_{\tau}^{t}\int_0^1 \left\vert LZ^{a,b}_1(x,t;y,\sigma)\right\vert
		\left\vert LZ_1^{a,b}(y,\sigma;\xi,\tau)-LZ_1^{a',b'}(y,\sigma;\xi,\tau)\right\vert dyd\sigma\\
		&+ \int_{\tau}^{t}\int_0^1
		\left\vert LZ_1^{a,b}(x,t;y,\sigma)-LZ_1^{a',b'}(x,t;y,\sigma)\right\vert
		\left\vert LZ^{a',b'}_1(y,\sigma;\xi,\tau\right\vert dyd\sigma\\
		&\leq C\frac{\left(\lVert (a,b)\rVert_{0,\alpha}\vee\lVert (a',b')\rVert_{0,\alpha}\right)^2\lVert (a,b)-(a',b')\rVert_{0,\alpha}}{\left(m_a\wedge m_{a'}\right)^{\frac{5}{2}+2\mu}(t-\tau)^{\mu+(\mu-1)}\vert x-\xi\vert^{1+2(2-2\mu-\alpha)}}\\
		&\times\exp\left(-\frac{(1-\epsilon)(x-\xi)^2}{4\left(\lVert a\rVert_{0,\alpha}\vee\lVert a'\rVert_{0,\alpha}\right)(t-\tau)}\right),
		\end{split}
		\end{equation*}
		and just as before, there exists $k_0\in\mathbb{N}^*$ such that
		\begin{equation*}
		\begin{split}
		&\left\vert (LZ)_{k_0}^{a,b}(x,t;\xi,\tau)-(LZ)_{k_0}^{a',b'}(x,t;\xi,\tau)\right\vert\\
		&\leq C\frac{\left(\lVert (a,b)\rVert_{0,\alpha}\vee\lVert (a',b')\rVert_{0,\alpha}\right)^{k_0}}{\left(m_a\wedge m_{a'}\right)^{\frac{5}{2}+k_0\mu}}\\
		&\exp\left(-\frac{(1-\epsilon)(x-\xi)^2}{4\left(\lVert a\rVert_{0,\alpha}\vee\lVert a'\rVert_{0,\alpha}\right)(t-\tau)}\right)\lVert (a,b)-(a',b')\rVert_{0,\alpha}.
		\end{split}
		\end{equation*}
		Now, using Lemma \ref{Lem:Bound_MittagLeffler} as before, the function $\Phi^{a,b}$ is locally Lispchitz continuous with the following constant:
		\begin{equation*}
		\begin{split}
		&\left\vert\Phi^{a,b}(x,t;\xi,\tau)-\Phi^{a',b'}(x,t;\xi,\tau)\right\vert\\
		&\leq \frac{C\lVert (a,b)-(a',b')\rVert_{0,\alpha}}{\left(m_a\wedge m_{a'}\right)^{\frac{5}{2}}(t-\tau)^{\mu}\vert x-\xi\vert^{3-2\mu-\alpha}}\\
		&\exp\left(-\frac{(1-\epsilon)(x-\xi)^2}{4\left(\lVert a\rVert_{0,\alpha}\vee\lVert a'\rVert_{0,\alpha}\right)(t-\tau)}+C\left(\frac{\left(\lVert (a,b)\rVert_{0,\alpha}\vee\lVert (a',b')\rVert_{0,\alpha}\right)}{\left(m_a\wedge m_{a'}\right)^\mu}\right)^{\frac{1}{1-\mu}}\right),
		\end{split}
		\end{equation*}
		where $1-\frac{\alpha}{2}<\mu<1$ and $0<\epsilon<1$.
	\end{proof}
	
	Now that the necessary properties have been derived for the perturbation kernel $\Phi^{a,b}$, it remains to show that they are carried over to Green functions using equation \eqref{Eq:Green_construction}. This will finish the proof of Theorem \ref{Thm:ParabolicGreen}.
	
	\begin{proof}[Proof of Theorem \ref{Thm:ParabolicGreen}]
		Let $G^{a,b}$ defined as in equation \eqref{Eq:Green_construction} with $Z^a$ being either $Z^a_D$ from equation \eqref{Eq:Parametrix_Dirichlet} or $Z^a_N$ from equation \eqref{Eq:Parametrix_Neumann}. The fact that it is indeed a fundamental solution is proven in Theorem 8, Chapter 1 from \cite{Friedman1992}. In both cases ($Z^a_N$ or $Z^a_D$), the boundary conditions are obtained immediately. Now, applying results from Propositions \ref{Prop:Parametrix} and \ref{Prop:Kernel}, one has for all $(a,b)\in \Lambda_\alpha$:
		\begin{equation*}
		\begin{split}
		&\left\vert G^{a,b}(x,t;\xi,\tau)\right\vert\\
		&\leq \frac{C}{a(\xi,\tau)^{\mu}(t-\tau)^{\mu}\vert x-\xi\vert^{1-2\mu}}\exp\left(-\frac{(1-\epsilon)(x-\xi)^2}{4\lVert a\rVert_{0,\alpha}(t-\tau)}\right)\\
		&+C\frac{\exp\left(C\left(\frac{\lVert (a,b)\rVert_{0,\alpha}}{m_a^{\mu^*}}\right)^{\frac{1}{1-\mu^*}}-\frac{(1-\epsilon)(x-\xi)^2}{4\lVert a\rVert_{0,\alpha}(t-\tau)}\right)}{m_a^{\mu}(t-\tau)^{\mu+(\mu^*-1)}\vert x-\xi\vert^{1-2\mu+(2-2\mu^*-\alpha)}},
		\end{split}
		\end{equation*}
		where $0\leq \mu\leq \frac{1}{2}$ and $1-\frac{\alpha}{2}<\mu^*<1$. Since $\max(\mu^*-1,2-2\mu^*-\alpha)<0$ it becomes $\forall q>\frac{2}{\alpha}$:
		\begin{equation*}
		\begin{split}
		&\left\vert G^{a,b}(x,t;\xi,\tau)\right\vert\\
		&\leq\frac{C}{m_a^{\mu}(t-\tau)^{\mu}\vert x-\xi\vert^{1-2\mu}}\exp\left(-\frac{(1-\epsilon)(x-\xi)^2}{4\lVert a\rVert_{0,\alpha}(t-\tau)}+Cm_a^{1-q}\lVert (a,b)\rVert_{0,\alpha}^q\right),
		\end{split}
		\end{equation*}
		with $0\leq\mu\leq\frac{1}{2}$. In particular, the choice of $\mu=\frac{1}{2}$ gives:
		\begin{equation*}
			\left\vert G^{a,b}(x,t;\xi,\tau)\right\vert
			\leq\frac{C}{(t-\tau)^{\frac{1}{2}}}\exp\left(Cm_a^{1-q}\lVert (a,b)\rVert_{0,\alpha}^q\right).
		\end{equation*}
		The local Lipschitz continuity is obtained with the same method, simply rewriting the expression as follows and using Propositions \ref{Prop:Parametrix} and \ref{Prop:Kernel}:
		\begin{equation*}
		\begin{split}
		&\left\vert G^{a,b}(x,t;\xi,\tau)-G^{a',b'}(x,t;\xi,\tau)\right\vert\\
		&\leq \left\vert Z^a(x,t;\xi,\tau)-Z^{a'}(x,t;\xi,\tau)\right\vert\\
		&+\int_{\tau}^{t}\int_0^1Z^a(x,t;y,\sigma)\left\vert\Phi^{a,b}(y,\sigma;\xi,\tau)-\Phi^{a',b'}(y,\sigma;\xi,\tau)\right\vert dyd\sigma\\
		&+\int_{\tau}^{t}\int_0^1\left\vert Z^{a}(x,t;y,\sigma)-Z^{a'}(x,t;y,\sigma)\right\vert\left\vert\Phi^{a',b'}(y,\sigma;\xi,\tau)\right\vert dyd\sigma,
		\end{split}
		\end{equation*}
		and the same argument applies.
	\end{proof}
	
	\subsection{Proofs of the well-posedness of Bayesian inference of the coefficients}
	\label{subsec:SDE}
	
	\begin{proof}[Proof of Theorem \ref{Thm:Main_thm}]
		By Theorem \ref{Thm:ParabolicGreen}, the likelihood is continuous in $(a,b,y,s)$. 
		Therefore to establish that $\mu^{y,s}$ is well-defined, it remains to show that $0<\cZ(y,s)<\infty$.
		Setting the constant constant $b^0(x,t)=0$ and $a^0(x,t)=1$ for $x\in[0,1]$ and $t\in(0,T)$, one has that $\mathcal{L}^{y,s}(a^0,b^0)>0$ (as $p^{a^0,b^0}(x,t;\xi,\tau)=Z^{a^0}(x,t,\xi,\tau)$ from equations \eqref{Eq:Parametrix_Dirichlet} or \eqref{Eq:Parametrix_Neumann} and $y\in(0,1)^n$). Since the likelihood is locally Lipschitz in $(a,b)$, there exists a measurable set $A\subset \Lambda_\alpha$ containing $(a^0,b^0)\in\Lambda_\alpha$ with $\mu_0(A)>0$ and such that $\mathcal{L}^{y,s}(a,b)>0$ for all $(a,b)\in A$. 
		Hence
		$$
		\cZ(y,s)\ge\int_A\mathcal{L}^{y,s}(a,b)\,d\mu_0>0.
		$$
		Now, using the upper bound on the transition probability density function from Theorem \ref{Thm:ParabolicGreen}, it foloows that there exists $C>0$ such that:
		\begin{equation*}
		\forall q>\frac{2}{\alpha},\;\mathcal{L}^{a,b}\left(y,s\right)=\prod_{i=1}^{n-1}p^{(a,b)}(y_{i+1},s_{i+1};y_i,s_i)\leq C\exp\left(Cm_a^{1-q}\lVert (a,b)\rVert^q_{0,\alpha}\right),
		\label{Eq:BoundLike}
		\end{equation*}
		and hence $\cZ(y,s)<+\infty$ by the hypothesis on $\mu_0$. The posterior measure is then uniquely defined by the density in equation \eqref{Eq:RadonNikodym}. The last step is to show the continuity in Hellinger's metric. This follows along the lines of the proof of \cite[Lemma 3.7]{Latz2020} since by Theorem \ref{Thm:ParabolicGreen} $\mathcal{L}^{y,s}(a,b)$ is continuous in $y$ and $s$ and for any fixed $(y,s)$ is bounded by a $\mu_0$-integrable function of $(a,b)$.
	\end{proof}

	\begin{proof}[Proof of Theorem \ref{Thm:Approximation}]
		By the hypothesis, one has $(a,b), (a_k,b_k)\in \Lambda_\alpha$ with $\alpha\in(0,1)$ for all $k\in\mathbb{N}$ and thus Theorem $\ref{Thm:Main_thm}$ implies that $\mu^{a,b}$ and $\mu_k^{a,b}$, for any $k\in\mathbb{N}$, are well defined. Now, observe that we have:
		\begin{equation*}
		\begin{split}
		&\left\vert \mathcal{L}^{y,s}(a,b)-\mathcal{L}^{y,s}\left(a_k,b_k\right)\right\vert\\
		&\leq \left\vert\prod_{i=1}^{n-1}p^{a,b}(x_{i+1},s_{i+1},x_i,s_i)-\prod_{i=1}^{n-1}p^{a_k,b_k}(x_{i+1},s_{i+1},x_i,s_i)\right\vert,\\
		&\leq C\exp\left(C\left(m_a\wedge m_{a_k}\right)^{1-q}\left(\lVert (a,b)\rVert_{0,\alpha}\vee\lVert (a_k,b_k)\rVert_{0,\alpha}\right)^q\right)\lVert (a,b)-(a_k,b_k)\rVert_{0,\alpha},\\
		&\leq C\exp\left(Cm_a^{1-q}\lVert (a,b)\rVert_{0,\alpha}^q\right)\psi(k),
		\end{split}
		\end{equation*}
		from which it follows that for all $k\in\mathbb{N}$:
		\begin{equation*}
			\left\vert \cZ(y,s)-\cZ_k(y,s)\right\vert\leq C\psi(k).
		\end{equation*}
		In particular, for all $\epsilon>0$ there exists $N\in\mathbb{N}$ such that for all $k\geq N$, $\left\vert \cZ_k(y,s)-\cZ(y,s)\right\vert\leq\epsilon$ and since $\cZ(y,s)>0$, one can find a positive constant $c$ such that $\cZ(y,s)>c$ and $\cZ_k(y,s)>c$ for all $n$ large enough. Now, following \cite{Stuart2010}, we have
		\begin{equation*}
		\begin{split}
		d_{\rm H}\left(\mu^y,\mu^y_k\right)^2&=\frac{1}{2}\int_{\Lambda_\alpha}\left(\sqrt{\frac{\mathcal{L}^{y,s}(a,b)}{\cZ(y,s)}}-\sqrt{\frac{\mathcal{L}^{y,s}\left(a_k,b_k\right)}{\cZ_k(y,s)}}\right)^2d\mu_0,\\
		&\leq \frac{1}{2}\int_{\Lambda_\alpha}\left\vert\frac{\mathcal{L}^{y,s}(a,b)}{\cZ(y,s)}-\frac{\mathcal{L}^{y,s}\left(a_k,b_k\right)}{\cZ_k(y,s)}\right\vert d\mu_0,\\
		&\leq I_1+I_2,
		\end{split}
		\end{equation*}
		where we used $\left(\sqrt{x}-\sqrt{y}\right)^2\leq \vert x-y\vert$, for all $(x,y)\in(0,\infty)^2$ and:
		\begin{equation*}
			\begin{split}
			I_1&=\cZ(y,s)^{-1}\int_{\Lambda_\alpha}\left\vert\mathcal{L}^{y,s}(a,b)-\mathcal{L}^{y,s}\left(a_k,b_k\right)\right\vert d\mu_0,\\
			I_2&=\left\vert \cZ(y,s)^{-1}-\cZ_k(y,s)^{-1}\right\vert \cZ_k(y,s).
			\end{split}
		\end{equation*}
		This gives that $d_{\rm H}(\mu^{y,s},\mu_k^{y,s})\leq C\sqrt{\psi(k)}$ for all $k$ sufficiently large.
	\end{proof}
	
	\subsection{Application to the BD process with sub-Gaussian exponential priors}\label{s:BDproofs}
	
	\begin{proof}[Proof of Proposition \ref{Prop:RandomSeries}]
		Let $\beta>2$, $l> \alpha+\frac{1}{2}$, $(f_k)$ be the Fourier basis in $L^2(0,1)$. Our objective is to show that $f=\sum_{k\geq 0}\gamma_k\eta_kf_k$ has an exponential moment of order $\beta$ in the $H^l$ norm, that is:
		\begin{equation*}
			\mathbb{E}\left[\exp\left(C\lVert f\rVert_{H^l}^\beta\right)\right]<\infty.
		\end{equation*}
		First of all, observe that:
		\begin{equation*}
		\lVert f\rVert_{H^l}^2=\sum_{k\geq 0}k^{2l}\gamma_k^2\eta_k^2\leq c_2\sum_{k\geq 0}k^{2(l-\theta)}\eta_k^2.
		\end{equation*}
		A direct application of H\"older inequality gives:
		\begin{equation*}
		\begin{split}
		\lVert f\rVert_{H^l}^\beta
		&\leq C\left(\sum_{k\geq 0}k^{2(l-\theta)}\eta_k^2\right)^\frac{\beta}{2}
		\leq
		C\left(\sum_{k\geq 0}k^{q(2l-\theta)}\right)^\frac{\beta}{2q}
		\left(\sum_{k\geq 0}k^{-\theta p}\left\vert\eta_k\right\vert^{2p}\right)^\frac{\beta}{2p},
		\end{split}
		\end{equation*}
		where $\frac{1}{p}+\frac{1}{q}=1$. Now, since $\beta>2$ choosing $p=\frac{\beta}{2}$ implies $q=\frac{\beta}{\beta-2}$ and
		\begin{equation}
		\lVert f\rVert_{H^l}^\beta
		\leq
		C\left(\sum_{k\geq 0}k^{\frac{\beta(2l-\theta)}{\beta-2}}\right)^\frac{\beta-2}{2}
		\sum_{k\geq 0}k^{-\frac{\theta\beta}{2}}\left\vert\eta_k\right\vert^\beta.
		\label{Eq:HolderBeta}
		\end{equation}
		Suppose now that $\theta>2l+1-2\beta^{-1}$, then the first term in the right hand-side of equation \eqref{Eq:HolderBeta} is a constant $C$. We deduce that, following \cite{Lassas2009}:
		\begin{equation}
		\begin{split}
		\mathbb{E}\left[\exp\left(\lVert f\rVert^\beta_{H^l}\right)\right]
		&\leq\mathbb{E}\left[\exp\left(C\sum_{k\geq 0}k^{-\frac{\theta\beta}{2}}\eta_k^\beta\right)\right],\\
		&\leq \prod_{k\geq 0}\mathbb{E}\left[\exp\left(Ck^{-\frac{\theta\beta}{2}}\eta_k^\beta\right)\right],\\
		&\leq \prod_{k\geq 0}\left(1-2Ck^{-\frac{\theta\beta}{2}}\right)^{-\frac{1}{\beta}},
		\end{split}
		\label{Eq:ExponentialMoment}
		\end{equation}
		where we used Fubini and the independence of the sequence $(\eta_k)$. The right-hand-side of equation \eqref{Eq:ExponentialMoment} converges if $2C<1$ and $\theta>\frac{2}{\beta}$. Finally, choosing $\theta>2l+1-2\beta^{-1}$ gives the required exponential moment of order $\beta>2$. Now, using the Sobolev embedding Theorem (see for instance \cite{Evans1998}), one has that $H^l$ injects continuously in $\mathcal{C}^{0,\alpha}$ for all $l> \alpha+\frac{1}{2}$. The almost-sure convergence of $f$ in $H^l$ follows immediately from the finiteness of its moments.
	\end{proof}
	
	\begin{proof}[Proof of Proposition \ref{Prop:WellposedU}]
		Let $l>\frac{1}{2}+\alpha$ with $\alpha\in(0,1]$ then clearly $U=g(f)h\in H^l$ and we have $\lVert U\rVert_{H^l}\leq C\lVert f\rVert_{H^l}+C$ with some $C>0$. Using the relation in equation \eqref{Eq:abU}, the distribution of $U$ defines a measure $\mu_0$ on $\Lambda_\alpha$ compatible with Theorem \ref{Thm:Main_thm}. Indeed, it is clear that $(a,b)\in \Lambda_\alpha$ almost-surely (since $U(0)>0$). Now, let us show that this gives the required integrability condition. We have 
		\begin{equation*}
		\begin{split}
		m_a=\min_{x\in[0,1]}a(x)&=\frac{1}{N}\min_{x\in[0,1]}\left(g(f(x))\left(1-\exp(x-1)\right)+\gamma x\right),\\
		&\geq\frac{1}{N}\min_{x\in[0,1]}\left(g(-\lVert f\rVert_\infty)\left(1-\exp(x-1)\right)+\gamma x\right),\\
		&\geq\frac{1}{N}\min_{x\in[0,1]}\left(\frac{1-\exp(x-1)}{2+\lVert f\rVert_\infty}+\gamma x\right),\\
		&\geq\frac{1}{N}\min\left(\frac{1-e^{-1}}{2+\lVert f\rVert_\infty},\gamma\right),\\
		&\geq\frac{\min(1-e^{-1},\gamma)}{N(2+\lVert f\rVert_\infty)},\\
		&\geq C\left(1+\lVert f\rVert_{H^l}\right)^{-1}>0,
		\end{split}
		\end{equation*}
		and hence, for any $q>\frac{2}{\alpha}$,
		\begin{equation*}
		\begin{split}
		&\mathbb{E}^{\mu_0}\left[\exp\left(Cm_a^{1-q}\lVert (a,b)\rVert_{0,\alpha}^q\right)\right],\\
		&\leq\mathbb{E}^{\mu_0}\left[\exp\left(C\left(1+\lVert f\rVert_{H^l}\right)^{q-1}\left(C\lVert f\rVert_{H^l}+C\right)^q\right)\right],\\
		&\leq\mathbb{E}^{\mu_0}\left[C\exp\left(C\lVert f\rVert_{H^l}^{2q-1}\right)\right].
		\end{split}
		\end{equation*}
		It is then enough, by Theorem \ref{Thm:Main_thm}, to choose $\beta>2q-1$ and finally $\theta>2l+1-2\beta^{-1}$ to obtain a well-defined posterior measure $\mu^{y,s}$ on $\Lambda_{\alpha}$. The result then follows after observing that for the $\mu_0$ considered in this proof we have $\cZ^\nu(y,s)=\cZ(y,s)$, and for any given $A\in\mathcal{B}(C_0^{0,\alpha})$, setting
		$$
		B=\Big\{(\frac{U+D}{N}, U-D)\in\Lambda_{\alpha}:U\in A\Big\},
		$$
		it holds that $\nu^{y,s}(A)=\mu^{y,s}(B)$.
	\end{proof}

\begin{proof}[Proof of Proposition \ref{p:MAPs}]

The probability measure $\nu_0$ is convex \cite[Section 4.3.3]{Bogachev2010} and differentiable along any $w\in H^{\theta}$ (see Section 5.3.2 of \cite{Bogachev2010}). For any $w=\sum_{k\ge 0}w_k f_k\in H^{\theta}$, the logarithmic derivative of $\mu_0$ along $w$, $\iota_w$, can be written as
\begin{align*}
\iota_w(U)=\lim_{K\to\infty}\sum_{k=1}^{K}\iota_{w_k}(u_k),~~\mbox{ in } L^1
\end{align*}
with
\begin{align*}
\iota_{w_k}(u_k)=u_k\left(-\beta{\rm sign}(u_k)\gamma_k^{-\beta}|u_k|^{\beta-1}\right)
\end{align*}
for any $U=\sum_{k\ge 1}u_k f_k\in C^{0,\alpha}_0$. This follows by arguing along the lines of the proof of Theorem 6 of \cite{Helin2015}. We then can conclude from \cite[Corollary 2]{Helin2015} and \cite{Lie2018a} that the MAP estimators of posterior measure for $U|y$ are given by minimisers of
\begin{equation*}
 -\log \mathcal{L}^{y,s}_{BD}(U)+\|g^{-1}(U/h)\|_{E}^\beta.
\end{equation*}

\end{proof}

	\begin{proof}[Proof of Proposition \ref{Prop:ApproxU}]
		The proof of Proposition \ref{Prop:WellposedU} is readily adapted when $U$ is replaced by $U_k$ and leads to a well-defined posterior measure $\mu_k^{y,s}$, continuous in the data $y$ with respect to Hellinger's metric for all $k\in\mathbb{N}$. Now, regarding the approximation, for any $\alpha\in(0,1]$ and $l>\frac{1}{2}+\alpha$, we have
		\begin{equation*}
		\lVert (a,b)-(a_k,b_k)\rVert_{0,\alpha}\leq C\lVert (a,b)-(a_k,b_k)\rVert_{H^l}\leq C\lVert U-U_k\rVert_{H^l}\leq C\lVert f-P_kf\rVert_{H^l}.
		\end{equation*}
		Now, observe that, as was done in \cite{Cotter2010} for any $l> l^*>\alpha+\frac{1}{2}$:
		\begin{equation*}
		\begin{split}
		\lVert f-P_kf\rVert_{H^{l^*}}
		&\leq C\left(\sum_{i\geq k+1}i^{2(l^*-\theta)}\eta_i^2\right)^\frac{1}{2},\\
		&\leq C\left(k+1\right)^{l^*-l}\left(\sum_{i\geq k+1}i^{2(l-\theta)}\eta_i^2\right)^\frac{1}{2},\\
		&\leq C\left(k+1\right)^{l^*-l}\lVert f\rVert_{H^l},
		\end{split}
		\end{equation*}
		so that finally for $l>l^*>\alpha+\frac{1}{2}$, $\lVert (a,b)-(a_k,b_k)\rVert_{0,\alpha}\leq \lVert f\rVert_{H^l}(k+1)^{l^*-l}$. Now similarly to the proof of Proposition \ref{Prop:WellposedU}, we have that for all $k\in\mathbb{N}$ and $l> \alpha+\frac{1}{2}$:
		\begin{equation*}
			m_{a_k}\geq C\left(1+\lVert P_kf\rVert_{H^l}\right)^{-1}\geq C\left(1+\lVert f\rVert_{H^l}\right)^{-1}.
		\end{equation*}
		The rest of the proof is similar to Theorem \ref{Thm:Approximation}, starting from the inequality:
		\begin{equation*}
			\left\vert \mathcal{L}^{y,s}(a,b)-\mathcal{L}^{y,s}\left(a_k,b_k\right)\right\vert
			\leq C\exp\left(C\lVert f\rVert_{H^l}^{2q-1}\right)(k+1)^{l^*-l}.
		\end{equation*}
		Taking finally $l^*\to \alpha+\frac{1}{2}$ gives the announced result.
	\end{proof}

	\appendix
	
	\section{Auxiliary Lemmas}
	
	\begin{lem}
		Let $T>0$, $(\alpha_1,\alpha_2,\alpha_3,\alpha_4)\in(0,1)^4$, $h>0$, $0\leq\tau<t\leq T$ and $(x,\xi)\in[0,1]^2$ then there exists $C>0$ such that:
		\begin{equation}
		\begin{split}
		&\int_{\tau}^{t}\int_{0}^1\frac{1}{(t-\sigma)^{\alpha_1}(\sigma-\tau)^{\alpha_2}\vert x-y\vert^{\alpha_3}\vert y-\xi\vert^{\alpha_4}}\exp\left(-\frac{h( x-y)^2}{4(t-\sigma)}-\frac{h( y-\xi)^2}{4(\sigma-\tau)}\right)dyd\sigma\\
		&\leq \frac{C}{(t-\tau)^{\alpha_1+\alpha_2-1}\vert x-\xi\vert^{\alpha_3+\alpha_4-1}}\exp\left(-\frac{h(x-\xi)^2}{4(t-\tau)}\right).
		\label{Eq:Lem_Bound}
		\end{split}
		\end{equation}
		\label{Lem:Bound}
	\end{lem}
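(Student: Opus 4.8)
The inequality \eqref{Eq:Lem_Bound} is the standard convolution-type estimate that makes the parametrix series converge, and the proof proceeds in two stages: first localise the Gaussian factors, then perform the $y$-integral followed by the $\sigma$-integral. The key elementary fact is that for $h>0$ and $0\le \tau<\sigma<t$ one has
\[
\frac{(x-y)^2}{t-\sigma}+\frac{(y-\xi)^2}{\sigma-\tau}\;\ge\;\frac{(x-\xi)^2}{t-\tau},
\]
with equality for the optimal $y$; this is just convexity (or a direct completion of the square). Using this, I would split the exponential as
\[
\exp\!\Big(-\tfrac{h(x-y)^2}{4(t-\sigma)}-\tfrac{h(y-\xi)^2}{4(\sigma-\tau)}\Big)
=\exp\!\Big(-\tfrac{h(x-\xi)^2}{4(t-\tau)}\Big)\,\exp\!\Big(-\tfrac{h'}{4}\big[\tfrac{(x-y)^2}{t-\sigma}+\tfrac{(y-\xi)^2}{\sigma-\tau}-\tfrac{(x-\xi)^2}{t-\tau}\big]\Big)\exp\!\Big(-\tfrac{h-h'}{4}\big[\cdots\big]\Big)
\]
for some $h'\in(0,h)$; the first factor is pulled out, the middle bracket is nonnegative so that factor is harmless, and the last factor provides genuine Gaussian decay in $y$ around the minimiser and in $\sigma$, which is what I will integrate against.

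\textbf{The $y$-integral.} With the clean Gaussian $\exp(-\tfrac{h-h'}{4}[\tfrac{(x-y)^2}{t-\sigma}+\tfrac{(y-\xi)^2}{\sigma-\tau}])$ available, I would bound the singular factor $|x-y|^{-\alpha_3}|y-\xi|^{-\alpha_4}$ by splitting $[0,1]$ into the region $\{|x-y|\le |y-\xi|\}$ and its complement. On the first region $|y-\xi|^{-\alpha_4}\le (|x-\xi|/2)^{-\alpha_4}$ up to constants (or one uses $|x-y|+|y-\xi|\ge|x-\xi|$), reducing the integrand to a single power singularity $|x-y|^{-\alpha_3}$ times a Gaussian in $y-x$ of width $\sqrt{t-\sigma}$; since $\alpha_3<1$, $\int |x-y|^{-\alpha_3}\exp(-c(x-y)^2/(t-\sigma))\,dy\le C(t-\sigma)^{(1-\alpha_3)/2}$. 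Symmetrically on the other region. Collecting the pieces, the $y$-integral is controlled by
\[
\frac{C}{|x-\xi|^{\alpha_3+\alpha_4}}\Big[(t-\sigma)^{\frac{1-\alpha_3}{2}}(\sigma-\tau)^{?}+(t-\sigma)^{?}(\sigma-\tau)^{\frac{1-\alpha_4}{2}}\Big]\exp\!\Big(-\tfrac{h(x-\xi)^2}{4(t-\tau)}\Big),
\]
but in fact the cleanest route keeps one Gaussian in $y$ entirely, extends the $y$-integral to $\mathbb{R}$, and uses the convolution identity for Gaussians so that the two separate width factors combine; this is the bookkeeping one has to be a little careful with.

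\textbf{The $\sigma$-integral and the main obstacle.} After the $y$-integration one is left, schematically, with $\int_\tau^t (t-\sigma)^{-\beta_1}(\sigma-\tau)^{-\beta_2}\,d\sigma$ times the extracted Gaussian and the $|x-\xi|$ powers, where $\beta_1,\beta_2<1$ by the hypotheses $\alpha_i<1$ (the exponents drop by $1/2$ each from the $y$-integration of a power-times-Gaussian, or one simply tracks them). This is a Beta integral: $\int_\tau^t (t-\sigma)^{-\beta_1}(\sigma-\tau)^{-\beta_2}d\sigma=(t-\tau)^{1-\beta_1-\beta_2}B(1-\beta_1,1-\beta_2)$, finite precisely because $\beta_1,\beta_2<1$, and this is the source of the $(t-\tau)^{1-(\alpha_1+\alpha_2)}$ and $|x-\xi|^{1-(\alpha_3+\alpha_4)}$ factors on the right-hand side. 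I expect the genuine obstacle to be purely notational rather than conceptual: one must organise the splitting of the exponent (choosing $h'$), the region decomposition for the $y$-singularities, and the tracking of how each power of $|x-y|,|y-\xi|$ contributes to both a power of $|x-\xi|$ and a power of the time increments, so that everything lands exactly on the stated exponents. Once the convexity inequality for the exponents and the two one-variable estimates ($\int |s|^{-\gamma}e^{-cs^2/\delta}ds\lesssim \delta^{(1-\gamma)/2}$ for $\gamma<1$, and the Beta integral for the time variable) are in place, the result follows by assembling these pieces.
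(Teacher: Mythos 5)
Your opening step --- completing the square so that $\frac{(x-y)^2}{t-\sigma}+\frac{(y-\xi)^2}{\sigma-\tau}-\frac{(x-\xi)^2}{t-\tau}\ge 0$ and pulling out the factor $\exp\left(-h(x-\xi)^2/(4(t-\tau))\right)$ --- is exactly the paper's first (and essentially only) idea. But from there you take a detour that does not land on the stated right-hand side. Once the Gaussian in $(x-\xi)$ has been extracted, the correct move is to bound the \emph{entire} remaining exponential by $1$: the double integral then factorises as $\left(\int_\tau^t(t-\sigma)^{-\alpha_1}(\sigma-\tau)^{-\alpha_2}\,d\sigma\right)\left(\int_0^1|x-y|^{-\alpha_3}|y-\xi|^{-\alpha_4}\,dy\right)$, and the affine substitutions $\sigma=\tau+s(t-\tau)$ and $y=\xi+u(x-\xi)$ turn each factor into a Beta-type integral, producing $(t-\tau)^{1-\alpha_1-\alpha_2}$ and $|x-\xi|^{1-\alpha_3-\alpha_4}$ respectively. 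That is the whole proof.

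Your plan instead keeps a residual Gaussian and spends it on the $y$-singularities via $\int|s|^{-\gamma}e^{-cs^2/\delta}\,ds\lesssim\delta^{(1-\gamma)/2}$. This converts $|x-y|^{-\alpha_3}$ into $(t-\sigma)^{(1-\alpha_3)/2}$, so after the Beta integral in $\sigma$ that piece contributes something of the form $|x-\xi|^{-\alpha_4}(t-\tau)^{1-\alpha_1-\alpha_2+(1-\alpha_3)/2}$. This is a genuinely different estimate: it is not dominated by the stated right-hand side (compare the two in the regime $|x-\xi|\ll\sqrt{t-\tau}$, where the extracted Gaussian is of order one and $(t-\tau)^{(1-\alpha_3)/2}$ is not controlled by $|x-\xi|^{1-\alpha_3}$), and no choice of $h'$ or of the region decomposition recovers the singular factor $|x-\xi|^{1-\alpha_3-\alpha_4}$ once the spatial singularity has been traded for time decay. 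Your own placeholders (the ``?'' exponents) and the remark that the exponents ``drop by $1/2$ each'' are symptoms of this: with $\beta_1=\alpha_1-(1-\alpha_3)/2$ the Beta integral gives $(t-\tau)^{1-\beta_1-\beta_2}$, not $(t-\tau)^{1-\alpha_1-\alpha_2}$. Space-for-time trades of this kind are the right tool for other convolution estimates in the parametrix method, but here the target is the pure product bound and the residual exponential must simply be discarded. (One further caveat, which applies to the paper's own argument as well: the bound $\int_0^1|x-y|^{-\alpha_3}|y-\xi|^{-\alpha_4}\,dy\le C|x-\xi|^{1-\alpha_3-\alpha_4}$ with $C$ uniform in $x,\xi$ requires $\alpha_3+\alpha_4>1$, since the rescaled integral $\int|1-u|^{-\alpha_3}|u|^{-\alpha_4}\,du$ must converge at infinity.)
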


	\begin{proof}
		Let $D$ be the left hand side of equation \eqref{Eq:Lem_Bound} and start by completing the square, observing that:
		\begin{equation*}
			z^2+\frac{h(x-\xi)^2}{4(t-\tau)}=\frac{h(x-y)^2}{4(t-\sigma)}+\frac{h( y-\xi)^2}{4(\sigma-\tau)}.
		\end{equation*}
		It immediately follows that:
		\begin{equation}
			D\leq\exp\left(-\frac{h(x-\xi)^2}{4(t-\tau)}\right)
			\int_{\tau}^{t}\frac{1}{(t-\sigma)^{\alpha_1}(\sigma-\tau)^{\alpha_2}}d\sigma
			\int_{0}^1\frac{1}{\vert x-y\vert^{\alpha_3}\vert y-\xi\vert^{\alpha_4}}dy.
			\label{Eq:Lem_Bound2}
		\end{equation}
		Now let $s=\frac{\sigma-\tau}{t-\tau}$, then it comes that:
		\begin{equation*}
			\int_{\tau}^{t}\frac{1}{(t-\sigma)^{\alpha_1}(\sigma-\tau)^{\alpha_2}}d\sigma=\frac{B(1-\alpha_1,1-\alpha_2)}{(t-\tau)^{\alpha_1+\alpha_2-1}},
		\end{equation*}
		where $B$ is the Beta function. Using a similar argument for the second integral in the right hand side of equation \eqref{Eq:Lem_Bound2} ends the proof.
	\end{proof}

	\begin{lem}
		Let $(a,0),(a',0)\in \Lambda_\alpha$. Then for all $(\xi,\tau)\in [0,1]\times[0,T]$ and $\beta\geq 1$:
		\begin{equation*}
		\left\vert\frac{1}{a(\xi,\tau)^\beta}-\frac{1}{a'(\xi,\tau)^\beta}\right\vert
		\leq\frac{\beta\vert a(\xi,\tau)-a'(\xi,\tau)\vert}{a'(\xi,\tau)a(\xi,\tau)\left(a(\xi,\tau)\wedge a'(\xi,\tau)\right)^{\beta-1}}.
		\end{equation*}
		The following inequality also holds for all $(x,t;\xi,\tau)$, $0\leq \tau<t$:
		\begin{equation*}
		\begin{split}
		&\left\vert\exp\left(-\frac{(x-\xi)^2}{4a(\xi,\tau)(t-\tau)}\right)-\exp\left(-\frac{(x-\xi)^2}{4a'(\xi,\tau)(t-\tau)}\right)\right\vert\\
		&\leq C\exp\left(-\frac{(1-\epsilon)(x-\xi)^2}{4\left(a(\xi,\tau)\vee a'(\xi,\tau)\right)(t-\tau)}\right)\frac{\vert a(\xi,\tau)-a'(\xi,\tau)\vert}{\left(a(\xi,\tau)\wedge a'(\xi,\tau)\right)}
		\end{split}
		\end{equation*}
		\label{Lem:Taylor}
	\end{lem}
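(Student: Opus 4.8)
The plan is to prove both inequalities pointwise in $(\xi,\tau)$: fixing such a point, I abbreviate $u:=a(\xi,\tau)$ and $v:=a'(\xi,\tau)$, both strictly positive since $(a,0),(a',0)\in\Lambda_\alpha$ (only positivity is used, not H\"older continuity). Each claim then reduces to an elementary one-variable estimate combined with the identity $(u\vee v)(u\wedge v)=uv$.

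\emph{First inequality.} I would begin from the algebraic identity $u^{-\beta}-v^{-\beta}=(v^\beta-u^\beta)/(uv)^\beta$ and apply the mean value theorem to $s\mapsto s^\beta$ to get $|v^\beta-u^\beta|=\beta\zeta^{\beta-1}|u-v|$ for some $\zeta$ between $u$ and $v$; since $\beta\ge1$ this yields $|v^\beta-u^\beta|\le\beta(u\vee v)^{\beta-1}|u-v|$. Dividing by $(uv)^\beta$ and using $(u\vee v)^{\beta-1}/(uv)^{\beta-1}=1/(u\wedge v)^{\beta-1}$ (which follows from $(u\vee v)(u\wedge v)=uv$) gives
\[
\Big|\frac{1}{u^\beta}-\frac{1}{v^\beta}\Big|\le\frac{\beta|u-v|}{uv}\cdot\frac{(u\vee v)^{\beta-1}}{(uv)^{\beta-1}}=\frac{\beta|u-v|}{uv\,(u\wedge v)^{\beta-1}},
\]
which is the asserted bound.

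\emph{Second inequality.} Here I set $r:=(x-\xi)^2/\big(4(t-\tau)\big)\ge0$ and work with $g(s):=e^{-r/s}$, so that $g'(s)=\tfrac{r}{s^2}e^{-r/s}$. By the mean value theorem $|g(u)-g(v)|=\tfrac{r}{\zeta^2}e^{-r/\zeta}|u-v|$ for some $\zeta$ between $u$ and $v$. The key point is to absorb the polynomial prefactor into the exponential: writing $\tfrac{r}{\zeta^2}e^{-r/\zeta}=\tfrac1\zeta\big(\tfrac r\zeta e^{-r/\zeta}\big)$ and using $\sigma e^{-\sigma}\le C_\epsilon e^{-(1-\epsilon)\sigma}$ for all $\sigma\ge0$ (with $C_\epsilon=1/(\epsilon e)$) gives $\tfrac{r}{\zeta^2}e^{-r/\zeta}\le\tfrac{C}{\zeta}e^{-(1-\epsilon)r/\zeta}$. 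Finally, $\tfrac1\zeta\le\tfrac1{u\wedge v}$ and $e^{-(1-\epsilon)r/\zeta}\le e^{-(1-\epsilon)r/(u\vee v)}$ (since $\zeta\le u\vee v$) yield
\[
\big|e^{-r/u}-e^{-r/v}\big|\le\frac{C}{u\wedge v}\,e^{-(1-\epsilon)r/(u\vee v)}\,|u-v|,
\]
which is the stated estimate once $r$, $u$, $v$ are written out.

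No step is a genuine obstacle; the only care needed is bookkeeping of exponents. In the first inequality one must factor through $(uv)^\beta$ rather than differentiate $s\mapsto s^{-\beta}$ directly, since the latter produces the weaker denominator $(u\wedge v)^{\beta+1}$. In the second, the elementary bound $\sigma e^{-\sigma}\lesssim e^{-(1-\epsilon)\sigma}$ is precisely what trades the $r/\zeta^2$ prefactor for a constant at the cost of the $1-\epsilon$ degradation of the Gaussian exponent, matching the role of $\epsilon$ throughout the parametrix construction.
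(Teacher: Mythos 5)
Your proof is correct and follows essentially the same route as the paper: both inequalities are elementary pointwise estimates via the mean value theorem, with the linear prefactor absorbed into the exponential at the cost of the factor $1-\epsilon$ (the paper uses $1-e^{-x}\le x$ where you differentiate $e^{-r/s}$, and applies the mean value theorem to $x\mapsto x^\beta$ at the ratio $a'/a$ where you apply it to $s\mapsto s^\beta$ directly, but these are cosmetic variations of the same argument). Your remark about why one must factor through $(uv)^\beta$ rather than differentiate $s\mapsto s^{-\beta}$ is a correct and worthwhile observation about where the sharper denominator $uv(u\wedge v)^{\beta-1}$ comes from.
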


	\begin{proof}
		The proof directly follows from the mean value Theorem. Indeed it comes that:
		\begin{equation*}
			\forall x>0,\;x^\beta=\left(x-1+1\right)^\beta=1+\vert x-1\vert\beta c^{\beta-1},\;c\in\left(\min\left(1, x\right),\max\left(1, x\right)\right).
		\end{equation*}
		Applying this relation with $x=\frac{a'(\xi,\tau)}{a(\xi,\tau)}$ gives:
		\begin{equation*}
		\left\vert\frac{1}{a(\xi,\tau)^\beta}-\frac{1}{a'(\xi,\tau)^\beta}\right\vert
		=\frac{1}{a'(\xi,\tau)^\beta}\left\vert \left(\frac{a'(\xi,\tau)}{a(\xi,\tau)}\right)^\beta-1\right\vert
		\leq \frac{\beta\vert a(\xi,\tau)-a'(\xi,\tau)\vert c^{\beta-1}}{a'(\xi,\tau)^\beta}.
		\end{equation*}
		Here $a(\xi,\tau)c\in\left(\min\left(a(\xi,\tau), a'(\xi,\tau)\right),\max\left(a(\xi,\tau), a'(\xi,\tau)\right)\right)$,
		\begin{equation*}
		\left\vert\frac{1}{a(\xi,\tau)^\beta}-\frac{1}{a'(\xi,\tau)^\beta}\right\vert
		\leq\frac{\beta\vert a(\xi,\tau)-a'(\xi,\tau)\vert }{a(\xi,\tau)a'(\xi,\tau)\left(a(\xi,\tau)\wedge a'(\xi,\tau)\right)^{\beta-1}}.
		\end{equation*}
		Now, using the inequality $1-\exp(-x)\leq x,\;\forall x\geq 0$, one has:
		\begin{equation*}
		\begin{split}
		&\left\vert
		\exp\left(-\frac{(x-\xi)^2}{4a(\xi,\tau)(t-\tau)}\right)
		-\exp\left(-\frac{(x-\xi)^2}{4a'(\xi,\tau)(t-\tau)}\right)
		\right\vert\\
		&\leq\exp\left(-\frac{(x-\xi)^2}{4\left(a(\xi,\tau)\vee a'(\xi,\tau)\right)}\right)
		\frac{(x-\xi)^2}{4(t-\tau)}
		\frac{\vert a(\xi,\tau)-a'(\xi,\tau)\vert}{a(\xi,\tau)a'(\xi,\tau)},\\
		&\leq\exp\left(-\frac{(1-\epsilon)(x-\xi)^2}{4\left(a(\xi,\tau)\vee a'(\xi,\tau)\right)}\right)
		\frac{\vert a(\xi,\tau)-a'(\xi,\tau)\vert}{\left(a(\xi,\tau)\wedge a'(\xi,\tau)\right)}.
		\end{split}
		\end{equation*}
	\end{proof}

	\begin{lem}
		Let $E_\alpha(x)$ be the one-parameter Mittag-Leffler function, defined as follows:
		\begin{equation*}
		E_\alpha(x)=\sum_{k\geq 0}\frac{x^k}{\Gamma(\alpha k+1)},
		\end{equation*}
		then there is $C>0$ such that
		\begin{equation*}
		E_\alpha(x)\leq C\exp\left( Cx^{\frac{1}{\alpha}}\right).
		\end{equation*}
		\label{Lem:Bound_MittagLeffler}
	\end{lem}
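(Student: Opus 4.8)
The plan is to reduce the bound, via a Stirling-type comparison of $\Gamma(\alpha k+1)$ with $(k!)^\alpha$, to an estimate of the auxiliary series $S(z):=\sum_{k\ge 0}z^k/(k!)^\alpha$, and then to bound $S(z)$ by locating the peak of its terms. We may assume $\alpha\in(0,1)$ (the only case used in the paper) and $x\ge 1$: indeed all coefficients of $E_\alpha$ are positive, so $E_\alpha$ is increasing on $[0,\infty)$, and for $x\in[0,1]$ one has $E_\alpha(x)\le E_\alpha(1)\le E_\alpha(1)\exp(x^{1/\alpha})$.

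First I would record that, by Stirling's formula, there exist constants $c>0$ and $b\in(0,1]$ (depending only on $\alpha$) with
$$\Gamma(\alpha k+1)\ \ge\ c\,b^{k}\,(k!)^{\alpha}\qquad\text{for all }k\ge 0;$$
for large $\alpha k$ this is Stirling applied on both sides (the resulting ratio behaves like $Ck^{(1-\alpha)/2}(\alpha^{\alpha})^{k}$), and the finitely many small $k$ are absorbed into $c$. Hence
$$E_\alpha(x)=\sum_{k\ge 0}\frac{x^k}{\Gamma(\alpha k+1)}\ \le\ \frac1c\sum_{k\ge 0}\frac{(x/b)^{k}}{(k!)^{\alpha}}=\frac1c\,S(x/b),$$
so it suffices to show $S(z)\le C\exp(Cz^{1/\alpha})$ for $z\ge 1$.

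To estimate $S(z)$, write $t_k:=z^k/(k!)^\alpha$ and use $k!\ge (k/e)^k$, which gives $t_k\le (ze^{\alpha}k^{-\alpha})^{k}$. An elementary maximisation of the exponent $k\mapsto k(\log z+\alpha-\alpha\log k)$ shows it is maximised at $k=z^{1/\alpha}$ with value $\alpha z^{1/\alpha}$, so $t_k\le\exp(\alpha z^{1/\alpha})$ uniformly in $k$. For the tail I would use the ratio $t_{k+1}/t_k=z/(k+1)^{\alpha}$, which is $\le 1/2$ as soon as $k\ge K:=\lceil(2z)^{1/\alpha}\rceil$; summing the resulting geometric series gives $\sum_{k\ge K}t_k\le 2t_K\le 2\exp(\alpha z^{1/\alpha})$. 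For the head, $\sum_{0\le k<K}t_k\le K\max_k t_k\le K\exp(\alpha z^{1/\alpha})$, and since $K\le 2^{1/\alpha}z^{1/\alpha}+1$ grows only polynomially in $z^{1/\alpha}$, this too is $\le C\exp(Cz^{1/\alpha})$. Adding the two pieces yields $S(z)\le C\exp(Cz^{1/\alpha})$, and then $E_\alpha(x)\le c^{-1}S(x/b)\le C\exp(Cb^{-1/\alpha}x^{1/\alpha})\le C\exp(Cx^{1/\alpha})$, which is of the claimed form.

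The only points needing any care are the uniformity (over all $k$, including the few small ones) of the Stirling comparison $\Gamma(\alpha k+1)\ge c\,b^k(k!)^\alpha$, and the verification that the critical point of the term exponent is $k^\ast=z^{1/\alpha}$ with critical value $\alpha z^{1/\alpha}$; both are routine calculus. If one prefers to avoid passing through $S$, the same argument applies directly to $t_k=x^k/\Gamma(\alpha k+1)$: Gautschi's inequality $\Gamma(y)/\Gamma(y+\alpha)\le 2^{1-\alpha}y^{-\alpha}$ (for $y\ge1$, $0<\alpha<1$) handles the tail ratio test, and the Stirling lower bound $\Gamma(\alpha k+1)\ge c(\alpha k/e)^{\alpha k}$ handles the peak, giving the identical conclusion with a different constant.
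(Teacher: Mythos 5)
Your proof is correct. The paper gives no actual argument for this lemma---it only remarks that it ``can be proved using a standard series/integral comparison argument''---and your write-up is a complete and valid instantiation of exactly that kind of elementary comparison: the Stirling bound $\Gamma(\alpha k+1)\ge c\,(\alpha^{\alpha})^{k}(k!)^{\alpha}$ (with $b=\alpha^{\alpha}\in(0,1)$ for $\alpha\in(0,1)$, and the finitely many small $k$ absorbed into $c$) reduces matters to $S(z)=\sum_{k}z^{k}/(k!)^{\alpha}$, the critical point of $k\mapsto k(\log z+\alpha-\alpha\log k)$ is indeed $k^{\ast}=z^{1/\alpha}$ with value $\alpha z^{1/\alpha}$, and the head/tail split at $K=\lceil(2z)^{1/\alpha}\rceil$ costs only a factor polynomial in $z^{1/\alpha}$, which is absorbed into the exponential. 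The restriction to $\alpha\in(0,1)$ is harmless here, since in the paper the lemma is only invoked with Mittag--Leffler parameter $1-\mu\in(0,\alpha/2)$.
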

	
	This last Lemma can be proved using a standard series/integral comparison argument.

\section*{Acknowledgments} The authors acknowledge support from the Leverhulme Trust for the Research Project Grant RPG2017-370.

	\bibliographystyle{abbrv}
	\bibliography{Biblio}
\end{document}